\documentclass[11pt,english]{amsart}
\usepackage{amssymb,latexsym}
\usepackage{amsfonts, amsmath, amsthm, amssymb, hyperref, fancybox, graphicx, color, times, babel}

\hyphenation{ambient}

\setlength{\oddsidemargin}{.30in}
\setlength{\evensidemargin}{.30in} \setlength{\textwidth}{5.7in}
\setlength{\parskip}{5pt} \setlength{\labelsep}{5pt}
\setlength{\parindent}{0pt} \setlength{\medskipamount}{3ex}

\newtheorem {thm}{Theorem}
\newtheorem* {thm*}{Theorem}

\newtheorem {cor}[thm]{Corollary}

\newtheorem* {cor*}{Corollary}
\newtheorem {lem}[thm]{Lemma}

\newtheorem {prop}[thm]{Proposition}
\newtheorem {defi}[thm]{Definition}
\newtheorem {rem}[thm]{Remark}

\theoremstyle{definition}
\newtheorem {exa}[thm]{Example}
\newtheorem* {conj*}{Conjecture}

\newtheorem* {quest*}{Question}

\DeclareMathOperator{\Gal}{Gal}

\DeclareMathOperator{\tors}{tors}
\DeclareMathOperator{\dens}{dens}

\newcommand{\Q}{\mathbb{Q}}

\newcommand{\B}{B}

\newcommand{\p}{\mathfrak{p}}

\newcommand{\OO}{\mathcal{O}}
\newcommand{\Ktors}{K^\times_{\text{tors}}}

\newcommand{\D}{D_{K,\ell}}

\setcounter{tocdepth}{1}

\begin{document}

\title[Reductions of subgroups of the multiplicative group]{Reductions of subgroups of the multiplicative group}
\author{Christophe~Debry \and Antonella~Perucca}
\address[]{Mathematics Department, KU Leuven, Celestijnenlaan 200B, 3001 Leuven, Belgium
\emph{and} Korteweg--de Vries Institute for Mathematics, Universiteit van Amsterdam.}
\email[]{christophe.debry@wis.kuleuven.be}
\address[]{Fakult\"at Mathematik, Universit\"at Regensburg, 93040 Regensburg, Germany}
\email[]{antonella.perucca@mathematik.uni-regensburg.de}
\thanks{}
\keywords{Number field, reduction, cyclotomic field, Kummer theory, Chebotarev Density Theorem, primes}
\subjclass[2000]{Primary:  11R44; Secondary: 11R18, 11R21,11Y40}
\maketitle

\begin{abstract}
Let $K$ be a number field, and let $G\subset K^\times$ be a finitely generated subgroup. Fix some prime number $\ell$, and consider the set of primes $\mathfrak p$ of $K$ satisfying the following property:  the reduction of $G$ modulo $\p$ is well-defined and has size coprime to $\ell$. We give a closed--form expression for the density of this set.
\end{abstract}

\section{Introduction}

\subsection{Motivation and Aim}
Consider the number field $K=\mathbb{Q}(\zeta_8)$ and let $G$ be one of the following subgroups of $K^\times$: $$\langle 12,18\rangle\qquad \langle 12\zeta_8,18\rangle \qquad \langle 12,18\zeta_8\rangle\,.$$
For the primes $\p$ of $K$ which do not lie over $2$ or $3$, we consider the reduction of $G$ modulo $\p$. What is the density of the set of primes $\p$ for which the size of $(G \bmod \p)$ is odd? It turns out that this density is $\frac{1}{56}$ for the first two groups and $\frac{1}{448}$ for the third.

The object of this paper is the computation of these densities and understanding why we get these very numbers.
For every number field $K$, for every finitely generated subgroup $G\subset K^\times$ and for every prime number $\ell$ we study the set of primes $\p$ of $K$ satisfying the following property: the reduction of $G$ modulo $\p$ is well-defined and has size coprime to $\ell$. This set turns out to have a natural density, for which we were able to write down a closed--form expression.\\
The problem that we solve in this paper has been opened by the works of Hasse \cite{Hasse1, Hasse2} in the 1960's. For an exhaustive survey about related questions we refer the reader to~\cite{MoreeArtin} by Moree. Recently the problem was solved for rank $1$ by the second--named author~\cite{PeruccaKummer}.
The generalization of this problem for algebraic groups has been studied by Jones and Rouse in~\cite{Jones_Rouse} for rank $1$ (in the generic case, i.e. assuming that the degrees of the torsion fields and the Kummer extensions appearing in the formulas are maximal). A complete solution to the problem for elliptic curves seems for the moment out of reach, however this provides a direction for future research.

\subsection{Illustration of the results}

Let $K$ be a number field. Let $G$ be a finitely generated and torsion--free subgroup of $K^\times$ of positive rank. We always tacitly exclude the finitely many primes $\p$ of $K$ for which the reduction of $G$ modulo $\p$ is not well-defined or contains $(0 \bmod \p)$.  If $\ell$ is a prime number, we want to compute the density 
$$\D(G):=\dens\{ \p: \ell\nmid \sharp(G \bmod \p) \}\,.$$
Let $k^\times_\p$ denote the multiplicative group of the residue field at $\p$. The size of $k^\times_\p$ is prime to $\ell$ if $\p$ does not split completely in $K_{\ell}:=K(\zeta_\ell)$. By the Chebotarev Density Theorem we then have
$$\D(G)\geqslant 1-\frac{1}{[K_\ell:K]}\,.$$
Suppose that $\ell$ is odd or that $\zeta_4\in K$, which ensures that the cyclotomic extensions $K_{\ell^n}:=K(\zeta_{\ell^n})$ are cyclic for every $n\geqslant 0$. Moreover, suppose that the elements of $G$ are $\ell$--th powers in $K$ only if they are already $\ell$-th powers in $G$. In this generic case, what matters about $G$ is only its rank: 

\begin{thm}\label{theorem1} 
Let $K$ be a number field and let $G$ be a torsion--free subgroup of $K^\times$ of finite rank $r>0$. Let $\ell$ be a prime number and assume that $\ell$ is odd or $\zeta_4\in K$.
Let $t\geqslant 1$ be the largest integer such that $K_\ell = K_{\ell^t}$. If the condition $G\cap (K^\times)^\ell = G^\ell$ holds then we have 
$$\D(G) =  1-\frac{1}{[K_\ell:K]}+\frac{\ell-1}{[K_\ell:K]\cdot(\ell^{r+1}-1)\cdot \ell^{r(t-1)}}\,.$$
\end{thm}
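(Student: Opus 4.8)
The plan is to stratify the primes $\p$ according to $v_\p:=\ord_\ell(N\p-1)$, where $N\p=\sharp k_\p$. Since $k_\p^\times$ is cyclic of order $N\p-1$, the subgroup $(G\bmod\p)$ has order prime to $\ell$ precisely when it is contained in the unique subgroup of $\ell$-prime order, namely $(k_\p^\times)^{\ell^{v_\p}}$. First I would note that $v_\p\geqslant 1$ if and only if $\zeta_\ell\in k_\p$, i.e. $\p$ splits completely in $K_\ell=K_{\ell^t}$, i.e. $\zeta_{\ell^t}\in k_\p$, i.e. $v_\p\geqslant t$; hence no prime has $v_\p\in\{1,\dots,t-1\}$, and, splitting $\D(G)$ as a disjoint sum over the possible values of $v_\p$,
$$\D(G)=\dens\{v_\p=0\}+\sum_{n\geqslant t}\dens\bigl\{v_\p=n,\ (G\bmod\p)\subseteq(k_\p^\times)^{\ell^n}\bigr\}.$$
By the Chebotarev Density Theorem the first term equals $1-\tfrac{1}{[K_\ell:K]}$.

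Next I would rewrite each summand as a splitting condition. For $n\geqslant t$ let $F_n$ be the extension of $K$ generated by $\zeta_{\ell^n}$ together with an $\ell^n$-th root of each element of a $\Z$-basis of $G$; it is finite Galois over $K$ and unramified outside $\ell$ and the primes dividing $G$, a finite set independent of $n$, so every prime under consideration is unramified in all the $F_n$. If $v_\p\geqslant n$, so that $\zeta_{\ell^n}\in k_\p$, then $(G\bmod\p)\subseteq(k_\p^\times)^{\ell^n}$ holds if and only if each basis element of $G$ is an $\ell^n$-th power modulo $\p$, i.e. each polynomial $X^{\ell^n}-g_i$ splits completely modulo $\p$, i.e. $\p$ splits completely in $F_n/K$. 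Since $v_\p=n$ rather than $v_\p>n$ means $\Frob_\p\neq 1$ on $K_{\ell^{n+1}}$, inclusion--exclusion combined with Chebotarev gives
$$\dens\bigl\{v_\p=n,\ (G\bmod\p)\subseteq(k_\p^\times)^{\ell^n}\bigr\}=\frac{1}{[F_n:K]}-\frac{1}{[F_n\cdot K_{\ell^{n+1}}:K]}.$$
A truncation argument --- the primes with $v_\p\geqslant N$ have density $[K_{\ell^N}:K]^{-1}\to 0$ as $N\to\infty$ --- shows that the convergent infinite sum of these quantities does compute $\D(G)$.

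It then remains to compute the degrees. Because $K_\ell=K_{\ell^t}$ and $\ell$ is odd or $\zeta_4\in K$, the group $\Gal(K(\zeta_{\ell^\infty})/K)$ is pro-cyclic, whence $[K_{\ell^n}:K]=[K_\ell:K]\,\ell^{\,n-t}$ and $[K_{\ell^{n+1}}:K_{\ell^n}]=\ell$ for all $n\geqslant t$. Using the hypothesis $G\cap(K^\times)^\ell=G^\ell$ one shows by Kummer theory that for $n\geqslant t$ the extension $F_n/K_{\ell^n}$ attains its largest possible degree $\ell^{\,nr}$ and that $\zeta_{\ell^{n+1}}\notin F_n$, so that $[F_n\cdot K_{\ell^{n+1}}:F_n]=\ell$. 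Granting this, $[F_n:K]=[K_\ell:K]\,\ell^{\,n(r+1)-t}$, the $n$-th stratum contributes
$$\frac{1}{[K_\ell:K]\,\ell^{\,n(r+1)-t}}\Bigl(1-\frac1\ell\Bigr)=\frac{\ell-1}{[K_\ell:K]\,\ell^{\,n(r+1)-t+1}},$$
and summing the geometric series over $n\geqslant t$ gives $\dfrac{\ell-1}{[K_\ell:K]\,(\ell^{r+1}-1)\,\ell^{\,r(t-1)}}$; adding $1-\tfrac{1}{[K_\ell:K]}$ yields the asserted formula.

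The hard part will be that Kummer-theoretic degree statement: that adjoining $\ell^n$-th roots of $G$ to $K_{\ell^n}$ produces an extension of full degree $\ell^{nr}$ and creates no new $\ell$-power root of unity. This is where the hypothesis on $G$ is really used, and where the assumption that $\ell$ is odd or $\zeta_4\in K$ becomes indispensable: for $\ell=2$ it excludes the classical breakdown of irreducibility of $X^4-a$ over a field containing $\sqrt{-1}$ (the failure when $a$ equals $-4$ times a fourth power), and more generally it controls the interaction between $\ell$-power roots of unity and $\ell$-power-free elements of $K^\times$. Concretely, one wants a basis of $G$ to remain multiplicatively independent modulo $\ell^n$-th powers in $K_{\ell^m}^\times$, with no relation introduced by the roots of unity already present there; extracting this from the hypothesis on $G$, and keeping track of the torsion of $K_{\ell^m}^\times$, is the delicate step.
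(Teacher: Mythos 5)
Your skeleton is exactly the paper's: the stratification by $v_\ell(\sharp k_\p^\times)$, the identification of each stratum with a splitting condition in $F_n=K_{\ell^n}(\sqrt[\ell^n]{G})$ versus $F_nK_{\ell^{n+1}}=K_{\ell^{n+1}}(\sqrt[\ell^n]{G})$, and the truncation argument together reprove Theorem~\ref{thmdensity}, and your closing geometric-series computation is correct. The problem is the sentence ``one shows by Kummer theory that $F_n/K_{\ell^n}$ attains its largest possible degree $\ell^{nr}$ and that $\zeta_{\ell^{n+1}}\notin F_n$'': you have correctly located the entire difficulty of the theorem there, but you give no argument for it, so the proposal does not prove the statement. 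Kummer theory only reduces $[F_n:K_{\ell^n}]$ to the index of $G\cap(K_{\ell^n}^\times)^{\ell^n}$ in $G$; your hypothesis $G\cap(K^\times)^\ell=G^\ell$ speaks about $\ell$-th powers in $K^\times$, while what is needed is that no element $b_1^{x_1}\cdots b_r^{x_r}$ with some $x_i$ prime to $\ell$ becomes an $\ell$-th power, even up to a root of unity, in the larger field $K_{\ell^n}$. That persistence is the nontrivial input: it is Proposition~\ref{prop:stay-indiv}, whose proof is Schinzel's theorem on abelian radical extensions, and it is exactly where the hypothesis that $\ell$ is odd or $\zeta_4\in K$ is consumed (not in any failure of irreducibility of $X^4-a$: the factorization of $X^4+4b^4$ occurs over every field, with or without $\sqrt{-1}$; the true obstruction for $\ell=2$, $\zeta_4\notin K$ is that a strongly $2$-indivisible $a$ with $K(\sqrt{a})=K(\zeta_4)$ becomes a square in $K_4$).

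To close the gap along your lines you would need: (i) the hypothesis $G\cap(K^\times)^\ell=G^\ell$ yields a basis $b_1,\dots,b_r$ of $G$ that is strongly $\ell$-independent in the sense of the paper (this is the specialization $d_i=h_i=0$ of Theorem~\ref{thm-def-parameters}); (ii) by Proposition~\ref{prop:stay-indiv} the $b_i$ remain strongly $\ell$-independent in every $K_{\ell^m}$; (iii) Lemma~\ref{lem:power-indep} then gives $G\cap(K_{\ell^m}^\times)^{\ell^n}=G^{\ell^n}$ for all $m\geqslant n$, whence $[K_{\ell^m}(\sqrt[\ell^n]{G}):K_{\ell^m}]=\ell^{nr}$ for $m=n$ and $m=n+1$ alike; comparing these two degrees and using $[K_{\ell^{n+1}}:K_{\ell^n}]=\ell$ for $n\geqslant t$ gives $\zeta_{\ell^{n+1}}\notin F_n$. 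With (i)--(iii) supplied, your argument coincides with the paper's, which specializes Theorem~\ref{theorem2} (itself built on the degree formula of Theorem~\ref{logdeg}) to $d_i=h_i=0$ and $\tau=\tau_i=t$.
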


More generally we prove the following:

\begin{thm}\label{theorem2}
Let $K$ be a number field and let $G$ be a torsion--free subgroup of $K^\times$ of finite rank $r>0$. Let $\ell$ be a prime number and assume that $\ell$ is odd or $\zeta_4\in K$. 
Let $t\geqslant 1$ be the largest integer such that $K_\ell = K_{\ell^t}$.

\begin{itemize}
	\item[(a)] There is some smallest integer $\tau$ satisfying $\tau\geq 1$ and $\zeta_{\ell^{\tau+1}}\not\in K_{\ell^{\tau}}\left(\sqrt[\ell^{\tau}]{G}\right)$. 
	
	\item[(b)] There exist unique integers $d_1\leqslant\cdots\leqslant d_r$ such that for every sufficiently large integer $n$ we have $$\frac{G}{G\cap (K^\times)^{\ell^n}} \cong \bigoplus_{i=1}^r  \frac{\mathbb{Z}}{\ell^{n-d_i}\mathbb{Z}}\,.$$ 
	\item[(c)] Write $\tau_i = \max(\tau,d_i)$ for all $i=1,\ldots,r$. Then $\D(G)$ equals $$1 - \frac{1}{[K_\ell:K]} + \frac{\ell^{t-1}(\ell-1)}{[K_\ell:K]}\left(\frac{\ell^{-\tau}}{1-\ell^{-1}}-\sum_{i=1}^r \ell^{d_1+\cdots+d_i-i\tau_i}\left(\frac{\ell^{-d_i}}{1-\ell^{-i}}-\frac{\ell^{-\tau_i}}{1-\ell^{-i-1}}\right)\right)\,.$$
\end{itemize}
\end{thm}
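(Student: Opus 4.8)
The plan is to rewrite $\D(G)$ as a sum of Chebotarev densities and then to compute the field degrees that occur. For a prime $\p$ of good reduction let $\ell^{v}$ be the exact power of $\ell$ dividing $\sharp k_\p^\times$. Since $k_\p^\times$ is cyclic, $\ell\nmid\sharp(G\bmod\p)$ if and only if every generator of $G$ reduces to an $\ell^{v}$-th power in $k_\p^\times$; as $\ell^{v}\mid\sharp k_\p^\times$ forces $\zeta_{\ell^{v}}\in k_\p$, this is, for all but finitely many $\p$, equivalent to $\p$ splitting completely in $L_{v}:=K_{\ell^{v}}\bigl(\sqrt[\ell^{v}]{G}\bigr)$. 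Now $v=0$ means exactly that $\p$ does not split completely in $K_\ell$ (density $1-[K_\ell:K]^{-1}$); and if $\p$ splits completely in $K_\ell$ then $v\geqslant t$ automatically, while the event ``$v_\ell(\sharp k_\p^\times)=v$ and $\p$ splits completely in $L_{v}$'' is the event ``$\p$ splits completely in $L_{v}$ but not in $K_{\ell^{v+1}}$''. As all these fields are Galois over $K$, inclusion--exclusion gives
\[
\D(G)=1-\frac{1}{[K_\ell:K]}+\sum_{v\geqslant t}\left(\frac{1}{[L_{v}:K]}-\frac{1}{[L_{v}K_{\ell^{v+1}}:K]}\right).
\]

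\textbf{Step 2: collapse the series.}
Two inputs simplify this. First, for $t\leqslant v<\tau$ the minimality of $\tau$ in (a) yields $\zeta_{\ell^{v+1}}\in L_{v}$, so $L_{v}K_{\ell^{v+1}}=L_{v}$ and the $v$-th term is zero; for every $v\geqslant\tau$ one proves the complementary statement $\zeta_{\ell^{v+1}}\notin L_{v}$ (a monotonicity property of the entanglement), whence $K_{\ell^{v+1}}\cap L_{v}=K_{\ell^{v}}$ — the degree $[K_{\ell^{v+1}}:K_{\ell^{v}}]=\ell$ being prime — so that $[L_{v}K_{\ell^{v+1}}:L_{v}]=\ell$. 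Second, the hypothesis ``$\ell$ odd or $\zeta_{4}\in K$'' makes $K_{\ell^{n}}/K$ cyclic for all $n$, so $[K_{\ell^{v}}:K]=[K_\ell:K]\,\ell^{\,v-t}$ for $v\geqslant t$. Setting $e(v):=\log_\ell[L_{v}:K_{\ell^{v}}]$, these two facts turn the sum into
\[
\D(G)=1-\frac{1}{[K_\ell:K]}+\frac{(\ell-1)\,\ell^{\,t-1}}{[K_\ell:K]}\sum_{v\geqslant\tau}\ell^{-v-e(v)}.
\]

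\textbf{Step 3: the Kummer degrees $e(v)$ and the summation.}
By Kummer theory $\ell^{e(v)}=\sharp\bigl(G/(G\cap(K_{\ell^{v}}^\times)^{\ell^{v}})\bigr)$, so what is needed is a description of how much more divisible the elements of $G$ are over $K_{\ell^{v}}$ than over $K$. Over $K$ this is exactly the content of the elementary divisors $d_{1}\leqslant\cdots\leqslant d_{r}$ in (b); over $K_{\ell^{v}}$ extra divisibility arises only through relations with roots of unity, and by (a) such relations are controlled by $\tau$. Reducing relations over $K_{\ell^{v}}$ to relations over $K$ (via norms from $K_{\ell^{v}}$) and feeding in (a) and (b) produces $e(v)$ in closed form: it is piecewise linear in $v$, eventually of slope $r$, with corners at the thresholds $\tau_{i}=\max(\tau,d_{i})$ and additive constants governed by the partial sums $d_{1}+\cdots+d_{i}$. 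Substituting this into the series of Step 2 and splitting the range $v\geqslant\tau$ at the corners $\tau_{1}\leqslant\cdots\leqslant\tau_{r}$ expresses it as a sum of geometric series whose boundary contributions telescope; collecting terms reproduces exactly the parenthesised expression in (c). (As a consistency check, when $d_{i}=0$ and $\tau_{i}=\tau=t$ one gets $e(v)=rv$, and the telescoping collapses everything to $\ell^{-(r+1)t}/(1-\ell^{-(r+1)})$, recovering Theorem~\ref{theorem1}.)

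\textbf{Main obstacle.}
The technical heart is Step 3: cleanly separating the genuine $\ell$-divisibility of $G$ inside $K^\times$ from the additional divisibilities and relations produced after adjoining the $\ell$-power roots of unity, and checking that they assemble into precisely the thresholds $\tau_{i}=\max(\tau,d_{i})$ with the correct weights. The cyclicity assumption is what makes the cyclotomic degrees and the monotonicity in Step 2 behave well, and the case $\ell=2$ (where it is forced by $\zeta_{4}\in K$) requires a little extra attention.
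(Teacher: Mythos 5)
Your Steps 1 and 2 reproduce the paper's strategy: the decomposition of $\D(G)$ into Chebotarev densities over the fields $K_{\ell^{v}}(\sqrt[\ell^{v}]{G})$ and $K_{\ell^{v+1}}(\sqrt[\ell^{v}]{G})$ is exactly Theorem~\ref{thmdensity}, and the reduction to the tail $v\geqslant\tau$ is how the paper proceeds. But two essential claims are asserted rather than proved, and they are where all the work lies. First, the "monotonicity of the entanglement" in Step 2 — that $\zeta_{\ell^{v+1}}\notin K_{\ell^{v}}(\sqrt[\ell^{v}]{G})$ for \emph{every} $v\geqslant\tau$, not just for $v=\tau$ — is not obvious and does not follow from the definition of $\tau$ as a smallest integer; in the paper it falls out only after the explicit degree formula is available, since $\zeta_{\ell^{v+1}}\in K_{\ell^{v}}(\sqrt[\ell^{v}]{G})$ is shown to be equivalent to $v<\max_i\bigl(h_i+\min(v,d_i)\bigr)$, and one checks that the complementary set is exactly $[\tau,+\infty)$ with $\tau=\max(\{t\}\cup\{h_i+d_i: h_i>0\})$. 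Without some such argument your series could have further vanishing terms and the answer would change.

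Second, Step 3 — the closed form for $e(v)=v_\ell[K_{\ell^{v}}(\sqrt[\ell^{v}]{G}):K_{\ell^{v}}]$ — is the technical heart, and you flag it as the main obstacle without carrying it out. Note also that part (b), which you invoke as input, is itself part of what must be proved. The paper's route is: (i) produce a basis $b_i=B_i^{\ell^{d_i}}\zeta_i$ of $G$ with the $B_i$ strongly $\ell$--independent and $\zeta_i\in\mu_{\ell^{h_i}}$, via a maximization argument on $d(\mathcal{B})$ over bases (Theorem~\ref{thm-def-parameters}); (ii) show via Schinzel's theorem on abelian radical extensions that strong $\ell$--independence persists over every $K_{\ell^{m}}$ — this is precisely where the hypothesis ``$\ell$ odd or $\zeta_4\in K$'' is used, and your suggestion of ``reducing relations via norms from $K_{\ell^{v}}$'' would not by itself rule out new $\ell$--divisibilities appearing in the cyclotomic tower (indeed they do appear when $\ell=2$, $\zeta_4\notin K$); (iii) compute the structure of $G/(G\cap(K_{\ell^{v}}^\times)^{\ell^{v}})$, which yields both (b) and the formula $e(v)=\max(h_1+n_1,\ldots,h_r+n_r,v)-v+\sum_i\max(v-d_i,0)$ with $n_i=\min(v,d_i)$. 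Your description of $e(v)$ as piecewise linear with corners at $\tau_i$ is the right shape for $v\geqslant\tau$ (the corners are really at the $d_i$, which coincide with the $\tau_i$ inside the summation range), and the final geometric-series-and-telescoping computation in Step 3 matches the paper's; but as written the proposal establishes neither the degree formula it sums nor the range over which it sums.
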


In particular $\D(G)$ is a rational number that may be explicitly computed, see Section~\ref{section-computations}. As described in Section~\ref{heuristics}, the formula for the density may be exactly recovered by heuristics, if we suppose that the $\ell$--part of the reductions of points behave ``randomly''.
 
In the above Theorem we assume that $G$ is torsion--free, but we can also handle the case in which $G$ has torsion, because roots of unity of order prime to $\ell$ do not affect the density while if $G$ contains a root of unity of order $\ell$ then $\D(G)=0$ holds. Concerning the possibilities for $\ell$ and $K$, there is only one case left, namely $\ell = 2$ and $\zeta_4\notin K$. We deal with this remaining case as follows:
	
\begin{thm}\label{theorem3}
Let $K$ be a number field and let $G$ be a torsion--free subgroup of $K^\times$ of finite rank $r>0$. 
If $\zeta_4\notin K$, we have 
$$D_{K,2}(G)=\frac{1}{2} D_{K_4,2}(G)+  \frac{c }{2} \cdot [K_4(\sqrt{G}):K_4]^{-1}$$
where $c=0$ if $G$ contains minus a square in $K^\times$ and $c=1$ otherwise.
\end{thm}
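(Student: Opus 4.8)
The plan is to descend to the field $L:=K_4=K(i)$, over which $\zeta_4$ is present so that the previous results apply and the density $D_{L,2}(G)=D_{K_4,2}(G)$ is available (Theorem~\ref{theorem2}). Note that $[L:K]=2$. After discarding the density‑zero set of primes of $K$ lying over $2$, ramified in the relevant extensions, or bad for $G$, every remaining prime $\p$ of $K$ is either split in $L$ (equivalently $4\mid N\p-1$; density $\tfrac12$) or inert in $L$ (equivalently $N\p\equiv 3\pmod 4$, i.e.\ $v_2(N\p-1)=1$; density $\tfrac12$). Accordingly I would split
\[
D_{K,2}(G)=\dens_K\{\p\ \text{split in}\ L:\ \sharp(G\bmod\p)\ \text{odd}\}+\dens_K\{\p\ \text{inert in}\ L:\ \sharp(G\bmod\p)\ \text{odd}\}
\]
and evaluate the two terms separately.

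For the split part, the key observation is that a prime $\q$ of $L$ of degree one over $\Q$ necessarily lies above a prime $\p$ of $K$ that splits in $L$ (residue degrees multiply), while conversely every split $\p$ of $K$ lies below exactly two primes $\q$ of $L$, each with $k_\q=k_\p$ and hence $\sharp(G\bmod\q)=\sharp(G\bmod\p)$. Since the prime number theorem for number fields concentrates both densities on the degree‑one primes and gives $\pi_L(X)\sim\pi_K(X)$, a direct counting argument (each split $\p$ with $N\p\le X$ corresponds to two $\q$ with $N\q=N\p\le X$) shows that this $2$‑to‑$1$ correspondence multiplies densities by $2$; therefore
\[
\dens_K\{\p\ \text{split in}\ L:\ \sharp(G\bmod\p)\ \text{odd}\}=\tfrac12\,D_{L,2}(G)=\tfrac12\,D_{K_4,2}(G).
\]

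For the inert part, if $\p$ is inert in $L$ then $N\p\equiv 3\pmod 4$, so the $2$‑Sylow subgroup of the cyclic group $k_\p^\times$ has order $2$; hence $\sharp(G\bmod\p)$ is odd if and only if $(G\bmod\p)\subseteq(k_\p^\times)^2$, i.e.\ if and only if every element of $G$ is a square modulo $\p$, i.e.\ if and only if $\p$ splits completely in $K(\sqrt G)$. Thus the term to compute is $\dens_K\{\p:\ \p\ \text{inert in}\ L,\ \p\ \text{splits completely in}\ K(\sqrt G)\}$. By Kummer theory (using $-1\in K$), one has $i\in K(\sqrt G)$ exactly when the class of $-1$ lies in the subgroup of $K^\times/(K^\times)^2$ generated by $G$, i.e.\ exactly when $G$ contains $-1$ times a square of $K^\times$ — precisely the case $c=0$. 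Now apply the Chebotarev Density Theorem to $K(\sqrt G,\,i)/K$: if $c=0$ then $L\subseteq K(\sqrt G)$ and the two conditions on $\p$ are incompatible, so the density is $0$; if $c=1$ then $K(\sqrt G)$ and $L$ are linearly disjoint over $K$, the two conditions pin down the single element $(\id,\sigma)$ of the abelian group $\Gal(K(\sqrt G,i)/K)\cong\Gal(K(\sqrt G)/K)\times\Gal(L/K)$ (with $\sigma\ne\id$), and by linear disjointness $[K(\sqrt G,i):K]=2\,[K_4(\sqrt G):K_4]$, so the density equals $\bigl(2\,[K_4(\sqrt G):K_4]\bigr)^{-1}$. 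In both cases this is $\tfrac{c}{2}\,[K_4(\sqrt G):K_4]^{-1}$, and adding the two contributions gives the stated formula.

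I expect the only genuinely delicate point to be the density bookkeeping in the split case: one must verify carefully that the $2$‑to‑$1$ map from degree‑one primes of $L$ onto split primes of $K$ transfers densities by the exact factor $2$, which rests on $\pi_L(X)\sim\pi_K(X)$ together with the fact that in both fields the relevant densities ``see'' only degree‑one primes. The remaining ingredients — the elementary analysis of $k_\p^\times$ for inert $\p$, the Kummer‑theoretic identification of the condition ``$i\in K(\sqrt G)$'' with ``$G$ contains minus a square in $K^\times$'', and the Chebotarev computation over $K(\sqrt G,i)$ — are routine.
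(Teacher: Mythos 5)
Your proof is correct, but it follows a genuinely different route from the paper's. The paper's proof is a purely formal comparison of the two series from Theorem~\ref{thmdensity}: for $n\geqslant 2$ the $n$-th summand of $D_{K,2}(G)$ is exactly half the $n$-th summand of $D_{K_4,2}(G)$ (because $[K_4:K]=2$ and the fields $K_{2^n}(\sqrt[2^n]{G})$ coincide), so the difference $D_{K,2}(G)-\tfrac12 D_{K_4,2}(G)$ collapses to the single $n=1$ term $[K(\sqrt{G}):K]^{-1}-\tfrac12[K_4(\sqrt{G}):K_4]^{-1}$, which is then evaluated by the Kummer-theoretic Lemma~\ref{two}. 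You instead partition the primes of $K$ by their splitting behaviour in $K_4$: the split primes are matched $2$-to-$1$ with the degree-one primes of $K_4$ (your norm-counting argument via $\pi_{K_4}(X)\sim\pi_K(X)$ is the right way to justify the factor $\tfrac12$, and it is sound), while for the inert primes the $2$-Sylow of $k_\p^\times$ has order $2$, so oddness of $\sharp(G\bmod\p)$ becomes complete splitting in $K(\sqrt{G})$, and Chebotarev in $K(\sqrt{G},i)/K$ gives $0$ or $\bigl(2[K_4(\sqrt{G}):K_4]\bigr)^{-1}$ according to whether $i\in K(\sqrt{G})$, i.e.\ whether $G$ contains minus a square. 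The same Kummer-theoretic dichotomy thus appears in both proofs (in the paper it is packaged inside Lemma~\ref{two}). What your approach buys is an interpretation of the formula: the term $\tfrac12 D_{K_4,2}(G)$ is literally the density of split primes with odd reduction, and the correction term $\tfrac{c}{2}[K_4(\sqrt{G}):K_4]^{-1}$ is the contribution of the inert primes; the cost is that you must redo some density bookkeeping that the series formula of Theorem~\ref{thmdensity} already encapsulates, which makes the paper's argument shorter given the machinery at hand.
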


A result of independent interest is a general formula for the degree of the Kummer extension $K_{\ell^m}(\sqrt[\ell^n]{G})/K_{\ell^m}$ for $m\geq n$, in terms of finitely many integer parameters that express the $\ell$--divisibility of $G$ in $K$, see Section~\ref{section-degree}. 

\subsection{Overview of the paper} 

The paper is self-contained because it relies only on classical results about number fields.
Section~\ref{section-preliminaries} contains   preliminaries.
In Section~\ref{section-existence} we show that the natural density $\D(G)$ exists and that it may be expressed as an infinite sum:
$$\D(G)=\sum_{n\geqslant 0}\;\big(\; [K_{\ell^n}(\sqrt[\ell^{n}]{G}):K]^{-1}-[K_{\ell^{n+1}}(\sqrt[\ell^{n}]{G}):K]^{-1}\; \big)\,.$$
In Section~\ref{section-parameters} we determine a parametric formula for the size of the quotient of $G$ by the elements that are $\ell^n$--th powers in $K^\times$. In Section~\ref{section-degree} we may then express the extension degrees that appear in the above summation formula for the density. In Section~\ref{section-proof} we obtain a closed--form expression for $\D(G)$.
Section~\ref{specialheuristics} contains heuristics and special cases. Finally in Section~\ref{section-computations} we discuss the computability of the density.
Theorems~\ref{theorem2}~and~\ref{theorem3} are  proven in Section~\ref{section-proof} while Theorem~\ref{theorem1} is proven in Section~\ref{specialcases}.

\subsection{Acknowledgements} 
The first author is supported by a PhD fellowship of the Research Foundation -- Flanders (FWO). The second author is a lecturer at the University of Regensburg.

\subsection{Notation} 

In the whole paper (and without further mention), $K$ is a number field and $\ell$ is a fixed prime number. We use the following standard notation: $K^\times$ is the multiplicative group of $K$, and $\Ktors$ is the torsion subgroup of $K^\times$, consisting of roots of unity. For any nonzero prime ideal $\p$ of the ring of integers $\mathcal{O}_K$ of $K$, the residue field $\mathcal{O}_K/\p$ will be denoted by $k_\p$.
For $n\geqslant 0$, we write $\mu_{\ell^n}$ for the group of $\ell^n$--th roots of unity in a fixed algebraic closure $\bar{K}$ of $K$, and we call $\mu_{\ell^\infty}$ the union of all $\mu_{\ell^n}$.  We write $K_{\ell^n}$ for the cyclotomic extension $K(\mu_{\ell^n})$ of $K$, and we call $K_{\ell^\infty}$ the union of all $K_{\ell^n}$. If $G$ is a  subgroup of $K^\times$  and if $m\geqslant n$ we write $K_{\ell^m}(\sqrt[\ell^n]{G})$ for the smallest extension of  $K_{\ell^m}$ containing some $\ell^n$--th root of every element of $G$ (equivalently, all $\ell^n$--th roots). Moreover, we denote by $G^{\ell^n}$ the subgroup of $K^\times$ consisting of the $\ell^n$--th powers of elements of $G$. As customary, we write $\langle a_1,\ldots,a_n \rangle$ for the group generated by the elements $a_1,\ldots,a_n$. The $\ell$--adic valuation on $\mathbb{Z}$ will be denoted by $v_\ell$, and the size of a set by $\sharp$.

\section{Divisibility and independence properties}\label{section-preliminaries} 

\subsection{Divisibility properties in $K^\times$} 

An element of $K^\times$ is said to be $\ell$--divisible if it has some $\ell$--th root in $K^\times$. We consider this divisibility modulo roots of unity:

\begin{defi}[Strongly $\ell$--indivisible]
We call $a\in K^\times$ \emph{strongly $\ell$--indivisible} if there is no root of unity $\zeta\in K\cap \mu_{\ell^\infty}$ such that $a\zeta \in (K^\times)^{\ell}$.
\end{defi}

The ambient field is important in this definition but we do not specify it as soon as it is clear from the context.
We could drop the assumption that the order of $\zeta$ is a power of $\ell$ because roots of unity in $K$ of order prime to $\ell$ belong to  $(K^\times)^{\ell}$.

\begin{lem}\label{rem-indivisible}
If $a\in K^\times$ is strongly $\ell$--indivisible and $m>n\geqslant 0$ are integers, then $a^{\ell^{n}}\not\in (K^\times)^{\ell^m}$.
\end{lem}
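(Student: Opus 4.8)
The plan is to prove the contrapositive statement: if $a^{\ell^n} \in (K^\times)^{\ell^m}$ for some integers $m > n \geqslant 0$, then $a$ is not strongly $\ell$--indivisible. Since $m > n$, we may write $a^{\ell^n} = b^{\ell^m}$ for some $b \in K^\times$, hence $a^{\ell^n} = (b^{\ell^{m-n}})^{\ell^n} = c^{\ell^n}$ where $c = b^{\ell^{m-n}} \in (K^\times)^{\ell^{m-n}} \subseteq (K^\times)^\ell$, using $m - n \geqslant 1$. Therefore $(a/c)^{\ell^n} = 1$, so $a/c$ is a root of unity $\xi$ of order dividing $\ell^n$; in particular $\xi \in K \cap \mu_{\ell^\infty}$. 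Then $a = c\xi$ with $c \in (K^\times)^\ell$, and setting $\zeta = \xi^{-1}$ gives a root of unity in $K \cap \mu_{\ell^\infty}$ with $a\zeta = c \in (K^\times)^\ell$, which exhibits $a$ as not strongly $\ell$--indivisible.

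The key steps, in order, are: first reduce to the case where the witness power relation gives an $\ell^n$--th root of unity; second identify that this root of unity lies in the allowed torsion group $K \cap \mu_{\ell^\infty}$ (which is automatic since its order is a power of $\ell$); third observe that $c$ being an $\ell^{m-n}$--th power is in particular an $\ell$--th power because $m - n \geqslant 1$; and finally assemble these into the exact negation of the definition of strongly $\ell$--indivisible. The argument is essentially a short manipulation, and there is no serious obstacle — the only point requiring a moment's care is making sure the exponent $m - n$ is at least $1$ so that the $\ell^{m-n}$--th power is genuinely an $\ell$--th power, which is exactly why the hypothesis $m > n$ (rather than $m \geqslant n$) is needed. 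One could equivalently argue directly rather than by contrapositive, but the contrapositive makes the bookkeeping cleanest.
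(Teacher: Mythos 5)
Your proof is correct and follows essentially the same route as the paper: you set $\zeta = a^{-1}b^{\ell^{m-n}}$ (your $\xi^{-1}$), observe it is an $\ell^n$--th root of unity in $K$, and note $a\zeta = b^{\ell^{m-n}} \in (K^\times)^\ell$ because $m-n\geqslant 1$, contradicting strong $\ell$--indivisibility. No issues.
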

\begin{proof}
If we have $a^{\ell^{n}} = b^{\ell^m}$ for some $b\in K^\times$ then $\zeta=a^{-1}\cdot b^{\ell^{m-n}}$ is a root of unity in $K$ of order dividing $\ell^n$ and satisfying $a\zeta\in (K^\times)^{\ell}$, contradicting that $a$ is strongly $\ell$--indivisible.
\end{proof}
 
\begin{lem}\label{convenience} If $a\in K^\times$ is not a root of unity then $a=A^{\ell^d}\zeta$ for some nonnegative integer $d$, for some $A\in K^\times$ that is strongly $\ell$--indivisible and for some $\zeta\in K\cap \mu_{\ell^\infty}$. The integer $d$ is uniquely determined by the triple $(K,\ell,a)$.\end{lem}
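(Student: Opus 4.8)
The plan is to extract the largest power of $\ell$ that divides $a$ up to roots of unity, and to package the leftover into a strongly $\ell$-indivisible element. First I would consider the set
$$S = \{ e \geqslant 0 : a = B^{\ell^e}\zeta \text{ for some } B\in K^\times \text{ and some } \zeta\in K\cap\mu_{\ell^\infty}\}\,.$$
This set is nonempty since $0\in S$ (take $B=a$, $\zeta=1$). The key point is that $S$ is bounded above: if $a = B^{\ell^e}\zeta$ then, comparing heights (or, say, the absolute value under a fixed archimedean embedding), $|B|$ cannot be too close to $1$ because $a$ is not a root of unity, so $\ell^e$ is bounded in terms of $a$; alternatively one can use that the group $K^\times/\Ktors$ is free abelian and $a$ has a well-defined, finite $\ell$-adic divisibility in it. Hence $S$ has a largest element $d$, and I set $a = A^{\ell^d}\zeta$ with the corresponding $A$ and $\zeta$.

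Next I would check that this $A$ is strongly $\ell$-indivisible. Suppose not: then $A\eta\in(K^\times)^\ell$ for some $\eta\in K\cap\mu_{\ell^\infty}$, say $A\eta = C^\ell$. Then
$$a = A^{\ell^d}\zeta = (A\eta)^{\ell^d}\eta^{-\ell^d}\zeta = C^{\ell^{d+1}}(\eta^{-\ell^d}\zeta)\,,$$
and $\eta^{-\ell^d}\zeta\in K\cap\mu_{\ell^\infty}$, so $d+1\in S$, contradicting the maximality of $d$. This proves existence of the desired triple.

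For uniqueness of $d$: the integer $d$ is exactly $\max S$, and $S$ depends only on $(K,\ell,a)$, so $d$ is determined. Concretely, if $a = A^{\ell^d}\zeta = A'^{\ell^{d'}}\zeta'$ with both $A,A'$ strongly $\ell$-indivisible and $d' \geqslant d$, then $A^{\ell^d} = A'^{\ell^{d'}}\zeta'\zeta^{-1}$; writing $\zeta'\zeta^{-1} = \xi^{\ell^{d}}$ with $\xi\in K\cap\mu_{\ell^\infty}$ (possible since $K$ contains no new $\ell$-power roots beyond those already present, and in any case one may enlarge to kill the root of unity — more carefully, $\zeta'\zeta^{-1}$ is a root of unity of $\ell$-power order, hence an $\ell^d$-th power of a root of unity in $K$ provided $\mu_{\ell^{d}}\subset K$; if not, absorb the discrepancy using Lemma~\ref{rem-indivisible} below), one gets $(A\xi^{-1})^{\ell^d}\in (K^\times)^{\ell^{d'}}$, whence by Lemma~\ref{rem-indivisible} we must have $d' = d$ since $A\xi^{-1}$ is again strongly $\ell$-indivisible (its being an $\ell$-th power times a root of unity would make $A$ one too). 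I expect the main obstacle to be precisely this last bookkeeping with roots of unity in uniqueness: one must be careful that multiplying a strongly $\ell$-indivisible element by a root of unity of $\ell$-power order keeps it strongly $\ell$-indivisible, and that the ambiguity in $\zeta$ does not leak into $d$; Lemma~\ref{rem-indivisible} is the tool that forecloses it, since it forbids $a^{\ell^n}\in(K^\times)^{\ell^m}$ for $m>n$ when $a$ is strongly $\ell$-indivisible.
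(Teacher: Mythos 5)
Your existence argument is essentially the paper's: the paper fixes, for each $\zeta$ in the finite set $K\cap\mu_{\ell^\infty}$, the maximal $e$ with $a\zeta\in(K^\times)^{\ell^e}$ (finite by finite generation of $\mathcal{O}_K^\times$ when $a\zeta$ is a unit, and by unique factorization of the fractional ideal $a\mathcal{O}_K$ otherwise), and takes $d$ to be the maximum over $\zeta$; your set $S$ is precisely the union of these ranges, so $\max S$ is the same $d$, and your check that $A$ is strongly $\ell$--indivisible is the same maximality argument. One caveat on boundedness: the parenthetical ``absolute value under a fixed archimedean embedding'' does not work on its own (an element such as $(3+4i)/5\in\mathbb{Q}(i)^\times$ has absolute value $1$ at every archimedean place without being a root of unity), but your two alternatives --- the Weil height, or the freeness of $K^\times/\Ktors$ --- are both sound, and the latter is exactly the paper's mechanism. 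The genuine wobble is in uniqueness: the device of writing $\zeta'\zeta^{-1}=\xi^{\ell^d}$ with $\xi\in K\cap\mu_{\ell^\infty}$ fails in general (for $K=\mathbb{Q}$, $\ell=2$, $d\geqslant 1$ there is no square root of $-1$ to extract), and ``absorb the discrepancy'' is not yet an argument. The clean route, which is what the paper means by ``uniqueness follows from Lemma~\ref{rem-indivisible}'', is to raise $A^{\ell^d}\zeta=A'^{\ell^{d'}}\zeta'$ to the power $\ell^z$ where $\ell^z=\sharp(K\cap\mu_{\ell^\infty})$: all roots of unity die, giving $A^{\ell^{d+z}}=A'^{\ell^{d'+z}}\in(K^\times)^{\ell^{d'+z}}$, and Lemma~\ref{rem-indivisible} applied to the strongly $\ell$--indivisible element $A$ forces $d'+z\leqslant d+z$; by symmetry $d'=d$. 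With that one-line repair your proof is complete and coincides with the paper's.
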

\begin{proof}
This is a special case of~\cite[lem.~8]{PeruccaKummer}, which we include for the convenience of the reader. For any $x\in K^\times$ which is not a root of unity, there exists some maximal $d_x$ such that $x\in (K^\times)^{\ell^d}$. Indeed, if $x\in \mathcal{O}_K^\times$ this follows from the fact that $\mathcal{O}_K^\times$ is finitely generated, and otherwise this follows from the unique ideal factorisation of the principal fractional ideal $x\mathcal{O}_K$. Since $K\cap \mu_{\ell^\infty}$ is finite, we can define $d = \max\{d_{a\zeta}\mid \zeta\in K\cap \mu_{\ell^\infty}\}$. Taking $\zeta$ such that $d = d_{a\zeta}$ yields an element $A\in K^\times$ such that $a = \zeta^{-1}A^{\ell^d}$. By maximality of $d$, the element $A$ is strongly $\ell$--indivisible. The uniqueness of $d$ follows from Lemma~\ref{rem-indivisible}.
\end{proof}

\subsection{Kummer theory}\label{sec:kummercyclo}

We first recall some basic facts about Kummer theory, standard references being for example \cite{Birch, Lang}. Let $n$ be a nonnegative integer and suppose that $K$ contains the $\ell^n$--th roots of unity. The extension $K(\sqrt[{\ell^n}]{G})/K$ is Galois and the map $$\Gal(K(\sqrt[{\ell^n}]{G})/K)\times G\cdot (K^\times)^{{\ell^n}}\to \mu_{\ell^n}:(\sigma,a)\mapsto \sigma(\sqrt[{\ell^n}]{a})/\sqrt[{\ell^n}]{a}$$ where $\sqrt[{\ell^n}]{a}$ is any ${\ell^n}$--th root of $a$, is well--defined, bilinear, injective on the left and has kernel $(K^\times)^{\ell^n}$ on the right. We hence have the following group isomorphisms (of which the middle one is non--canonical): $$\Gal(K(\sqrt[{\ell^n}]{G})/K) \cong \text{Hom}_{\mathbb{Z}}\left(\frac{G\cdot (K^\times)^{{\ell^n}}}{(K^\times)^{\ell^n}},\mu_{\ell^n}\right)  \cong \frac{G\cdot (K^\times)^{{\ell^n}}}{(K^\times)^{\ell^n}}\cong \frac{G}{G\cap (K^\times)^{\ell^n}}\,.$$ In particular, the degree $[K(\sqrt[{\ell^n}]{G}):K]$ is equal to the index of $G\cap (K^\times)^{\ell^n}$ in $G$ and hence is a power of $\ell$. Another consequence is the following: if for some $a$ and $b$ in $K^\times$ we have $K(\sqrt[{\ell^n}]{a})\subseteq K(\sqrt[{\ell^n}]{b})$ then $a = x^{\ell^n}b^t$ for some $x\in K^\times$ and for some integer $t$.

We also recall some basic cyclotomic theory. Let $t\geqslant 1$ be the largest integer for which $K_\ell = K_{\ell^t}$ and suppose that $t\geqslant 2$ if $\ell = 2$. Then any cyclotomic extension of $K$ is cyclic and for any $m\geqslant t$ the group $\Gal(K_{\ell^m}/K_\ell)\cong \{x+\ell^m\mathbb{Z}\mid x\equiv 1\mod{\ell^t\mathbb{Z}}\}$ is cyclic of order $\ell^{m-t}$. In particular, $[K_{\ell^m}:K] = \ell^{m-t}[K_\ell:K]$ and $K_{\ell^{m+1}}\neq K_{\ell^m}$, and hence $v_\ell(\sharp(K_{\ell^m}^\times)_{\tors}) = m$. We also infer that any subextension of $K_{\ell^m}\supset K_\ell$ is cyclotomic.

\begin{prop}\label{prop:stay-indiv}
Let $a\in K^\times$ be strongly $\ell$--indivisible.
\begin{itemize}
	\item[(a)] (Schinzel) If $\zeta_\ell\notin K$ then the extension $K_\ell(\sqrt[\ell]{a})/K$ is not abelian.
	\item[(b)] Suppose that $\ell$ is odd or that $\zeta_4\in K$. For any nonnegative integer $m$, the element $a$ is also strongly $\ell$--indivisible in $K_{\ell^m}$.
\end{itemize}
\end{prop}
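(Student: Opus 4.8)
Strong $\ell$--indivisibility of $a$ forces $a\notin(K^\times)^\ell$ (take $\zeta=1$), which is all we shall use of it in part (a). For (a) the plan is a short Galois--theoretic contradiction. Since $\zeta_\ell\notin K$ the prime $\ell$ is odd. Fix an $\ell$--th root $\alpha$ of $a$ in $\bar K$; as $a\notin(K^\times)^\ell$ and $\ell$ is prime, $X^\ell-a$ is irreducible over $K$, so $[K(\alpha):K]=\ell$. The field $K_\ell(\sqrt[\ell]{a})$ is the splitting field of $(X^\ell-1)(X^\ell-a)$ over $K$ (it contains $\alpha$, the other roots $\zeta_\ell^i\alpha$ of $X^\ell-a$, and all $\ell$--th roots of unity), hence is Galois over $K$. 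If it were abelian, the intermediate field $K(\alpha)$ would be Galois over $K$, and being Galois of prime degree $\ell$ it would contain every conjugate $\zeta_\ell^i\alpha$ of $\alpha$, whence $\zeta_\ell=(\zeta_\ell\alpha)/\alpha\in K(\alpha)$ and $K_\ell\subseteq K(\alpha)$. Then $[K_\ell:K]$ divides both $\ell$ and $[\Q(\zeta_\ell):\Q]=\ell-1$, so $\zeta_\ell\in K$, a contradiction. There is no real obstacle here.

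For (b) I will pass through the fields $K_\ell$ and $K_{\ell^\infty}=\bigcup_m K_{\ell^m}$. The first step is to check that $a$ stays strongly $\ell$--indivisible in $K_\ell$. If $\zeta_\ell\in K$ (in particular if $\ell=2$) then $K_\ell=K$ and there is nothing to do, so assume $\ell$ odd and $\zeta_\ell\notin K$; then $q:=[K_\ell:K]$ divides $\ell-1$ and is coprime to $\ell$. Suppose $a\zeta=b^\ell$ with $b\in K_\ell^\times$ and $\zeta\in K_\ell\cap\mu_{\ell^\infty}$. Applying the norm $N=N_{K_\ell/K}$ gives $a^q\,N(\zeta)=N(b)^\ell$, where $N(\zeta)\in K\cap\mu_{\ell^\infty}$ since the Galois group permutes roots of unity. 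Choosing $u,v\in\Z$ with $uq+v\ell=1$ and substituting $a=(a^q)^u(a^\ell)^v$ yields $a\cdot N(\zeta)^u\in(K^\times)^\ell$ with $N(\zeta)^u\in K\cap\mu_{\ell^\infty}$, contradicting strong $\ell$--indivisibility of $a$ in $K$.

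The second step reduces everything to the single statement: if $a$ is strongly $\ell$--indivisible in $K_\ell$, then $\sqrt[\ell]{a}\notin K_{\ell^\infty}$. Granting this, if $a$ were not strongly $\ell$--indivisible in some $K_{\ell^m}$, say $a\xi=b^\ell$ with $\xi\in K_{\ell^m}\cap\mu_{\ell^\infty}$ and $b\in K_{\ell^m}^\times$, then any $\ell$--th root $\beta$ of $\xi^{-1}$ is a root of unity of $\ell$--power order and $b\beta$ is an $\ell$--th root of $a$ lying in $K_{\ell^\infty}$, a contradiction; combined with the first step this gives (b). To prove the displayed statement, suppose $\sqrt[\ell]{a}\in K_{\ell^N}$ with $N\geq t$. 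Because $\ell$ is odd or $\zeta_4\in K$, the extension $K_{\ell^N}/K_\ell$ is cyclic of $\ell$--power degree, so $K_\ell(\sqrt[\ell]{a})$ — which lies between $K_\ell$ and $K_{\ell^N}$ and has degree $1$ or $\ell$ over $K_\ell$ by Kummer theory — equals $K_\ell$ or the unique degree--$\ell$ intermediate field $K_{\ell^{t+1}}$. In the first case $a\in(K_\ell^\times)^\ell$. In the second, $K_{\ell^{t+1}}=K_\ell(\sqrt[\ell]{\zeta_{\ell^t}})$ because $\zeta_{\ell^t}\in K_{\ell^t}=K_\ell$ and $\zeta_{\ell^{t+1}}^\ell=\zeta_{\ell^t}$, so $K_\ell(\sqrt[\ell]{a})=K_\ell(\sqrt[\ell]{\zeta_{\ell^t}})$, and the consequence of Kummer theory recalled in Section~\ref{sec:kummercyclo} gives $a=x^\ell\zeta_{\ell^t}^j$ for some $x\in K_\ell^\times$ and $j\in\Z$, so $a\cdot\zeta_{\ell^t}^{-j}\in(K_\ell^\times)^\ell$. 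Either way $a$ fails to be strongly $\ell$--indivisible in $K_\ell$, which proves the statement by contraposition. I expect the only delicate point to be the first step when $\zeta_\ell\notin K$: Kummer theory is not available over $K$, and the norm argument — crucially using $\gcd([K_\ell:K],\ell)=1$ — is exactly what descends the obstruction from $K_\ell$ to $K$; everything else is bookkeeping with the cyclotomic facts of Section~\ref{sec:kummercyclo}.
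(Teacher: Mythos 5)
Your proposal is correct, and both parts take a genuinely different route from the paper's. For (a), the paper reproduces Schinzel's commutator computation: picking $\sigma$ nontrivial on $K_\ell$ with $\sigma(\zeta_\ell)=\zeta_\ell^c$, it shows $\sigma(\alpha)\alpha^{-c}$ is fixed by the whole (assumed abelian) Galois group, hence lies in $K$, forcing $a^{1-c}\in(K^\times)^\ell$ with $1-c$ prime to $\ell$. You instead use the irreducibility of $X^\ell-a$ (valid since $a\notin(K^\times)^\ell$ and $\ell$ is prime) together with the fact that intermediate fields of abelian extensions are normal, so $K(\alpha)$ would contain all conjugates $\zeta_\ell^i\alpha$ and hence $\zeta_\ell$, contradicted by the degree count $\gcd(\ell,\ell-1)=1$. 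Both arguments in fact only use $a\notin(K^\times)^\ell$; yours is the more textbook-elementary one. For (b), the paper's proof genuinely relies on (a): it observes that if $a$ fails to be strongly $\ell$--indivisible in $K_{\ell^m}$ then $K_{\ell^m}(\sqrt[\ell]{a})$ is cyclotomic, hence abelian over $K$, so (a) forces $\zeta_\ell\in K$, after which the cyclic chain of subfields of $K_{\ell^{m'}}/K$ pins down $K(\sqrt[\ell]{a})=K_{\ell^{t+1}}=K(\sqrt[\ell]{\zeta_{\ell^t}})$. You make (b) independent of (a) by first ascending strong $\ell$--indivisibility from $K$ to $K_\ell$ via the norm $N_{K_\ell/K}$ and a B\'ezout identity exploiting $\gcd([K_\ell:K],\ell)=1$, and only then running the same cyclotomic-chain argument, but over $K_\ell$ where Kummer theory is available. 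Your norm step is correct (the norm of an $\ell$--power-order root of unity is again one, lying in $K$), and the resulting modularity is a small structural gain; the paper's route is shorter because it recycles Schinzel's theorem, which it wants to state anyway. Both proofs use the hypothesis ``$\ell$ odd or $\zeta_4\in K$'' in exactly the same place, namely to guarantee that the subextensions of $K_{\ell^N}/K_\ell$ form the single chain $K_{\ell^j}$, so that the only degree-$\ell$ candidate is $K_{\ell^{t+1}}$.
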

\begin{proof}[Proof of (a):] This is a special case of Schinzel's Theorem about abelian radical extensions, see  \cite{Lenstra, Schinzel}, that we include for the convenience of the reader. Let $\mathcal G=\Gal(K_\ell(\sqrt[\ell]{a})/K)$, and suppose that $\mathcal G$ is abelian. Since $\zeta_\ell\notin K$, there is some automorphism $\sigma\in \mathcal G$ which is not the identity on $K_{\ell}$. We then have $\sigma(\zeta_{\ell})=\zeta_{\ell}^c$ for some integer $c$ such that $1-c$ is prime to $\ell$. Fix some $\alpha\in K_\ell(\sqrt[\ell]{a})$ satisfying $\alpha^\ell = a$. \newline Let $\tau$ be an element of $\mathcal G$. Since $\tau(\alpha)/\alpha$ is an $\ell$--th root of unity, it is raised to the $c$--th power by $\sigma$ so we have $\sigma(\tau(\alpha))\alpha^c = \sigma(\alpha)\tau(\alpha)^c$. Since $\sigma\tau = \tau\sigma$, we deduce $\tau(\sigma(\alpha)\alpha^{-c}) = \sigma(\alpha)\alpha^{-c}$. This is true for every $\tau$ so $\sigma(\alpha)\alpha^{-c}$ is in $K$ hence its $\ell$--th power $a^{1-c}$ is an $\ell$--th power in $K^\times$, contradicting that $a$ is strongly $\ell$--indivisible.
 
\emph{Proof of (b):} Suppose that $a$ is not strongly $\ell$--indivisible in $K_{\ell^m}$. Then $K_{\ell^m}(\sqrt[\ell]{a})$ is a cyclotomic extension of $K$ and in particular it is abelian, so we deduce from (a) that $\zeta_\ell\in K$. The condition on $\ell$ ensures that all subextensions of $K_{\ell^m}(\sqrt[\ell]{a})/K$ are again cyclotomic. Since $1<[K(\sqrt[\ell]{a}):K]\leqslant \ell$ we must have $K(\sqrt[\ell]{a}) = K_{\ell^{t+1}}$, where $t\geq 1$ is the largest integer such that $K = K_{\ell^t}$. So $K(\sqrt[\ell]{a}) = K(\sqrt[\ell]{\zeta_{\ell^t}})$ and hence $a = x^{\ell}\zeta_{\ell^t}^n$ for some $x\in K^\times$ and for some integer $n$, which contradicts that $a$ is strongly $\ell$--indivisible in $K_{\ell^m}$. 
\qedhere
\end{proof}

The fourth--roots of a strongly $2$-indivisible element of $K^\times$ are not contained in $K_{2^\infty}$, but if $\zeta_4\notin K$ it can happen that the square--roots are. See~\cite[section 4.3]{PeruccaKummer} for a precise description of this phenomenon.

\subsection{Independence properties in $K^\times$} 
We generalize the notion of strongly $\ell$--indivisible to several points as follows:

\begin{defi}[Strongly $\ell$--independent] We say that  $a_1,\ldots,a_r\in K^\times$ are {strongly $\ell$--independent} if $a_1^{x_1}\cdots a_r^{x_r}$ is strongly $\ell$--indivisible whenever $x_1,\ldots,x_r$ are integers not all divisible by $\ell$. \end{defi}

Strongly $\ell$--independent elements are each strongly $\ell$--indivisible. By Proposition~\ref{prop:stay-indiv}, if $\ell$ is odd or $\zeta_4\in K$, then elements of $K$ that are strongly $\ell$--independent remain strongly $\ell$--independent in $K_{\ell^m}$ for every $m\geqslant 0$.

\begin{lem}\label{lem:power-indep}
Suppose that $\ell$ is odd or that $\zeta_4\in K$. Let $a_1,\ldots,a_r$ be strongly $\ell$--independent elements of $K^\times$ and let $x_1,\ldots,x_r$ and $n\geqslant 0$ be integers. If $a_1^{x_1}\cdots a_r^{x_r}\in (K^\times)^{\ell^n}$ then $x_1,\ldots,x_r$ are all divisible by $\ell^n$.
\end{lem}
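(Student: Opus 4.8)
The plan is to argue by induction on $n$, with the case $n=0$ being vacuous. So suppose $n \geq 1$, that the statement is already known with $n$ replaced by $n-1$, and that $a_1^{x_1}\cdots a_r^{x_r} = b^{\ell^n}$ for some $b \in K^\times$. Since $b^{\ell^n} = (b^\ell)^{\ell^{n-1}}$, the product $a_1^{x_1}\cdots a_r^{x_r}$ already lies in $(K^\times)^{\ell^{n-1}}$, so the inductive hypothesis yields integers $y_1,\ldots,y_r$ with $x_i = \ell^{n-1}y_i$ for all $i$.

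Substituting, I would then observe that $(a_1^{y_1}\cdots a_r^{y_r})^{\ell^{n-1}} = (b^\ell)^{\ell^{n-1}}$, so that $\zeta := a_1^{y_1}\cdots a_r^{y_r}\cdot b^{-\ell}$ is an $\ell^{n-1}$-th root of unity which, being a quotient of elements of $K^\times$, lies in $K$; thus $\zeta \in K \cap \mu_{\ell^\infty}$. Now $a_1^{y_1}\cdots a_r^{y_r}\cdot \zeta^{-1} = b^\ell \in (K^\times)^\ell$, so $a_1^{y_1}\cdots a_r^{y_r}$ fails to be strongly $\ell$--indivisible. By the definition of strong $\ell$--independence this is only possible if $\ell \mid y_i$ for every $i$, whence $\ell^n \mid x_i = \ell^{n-1}y_i$ for every $i$, which closes the induction. (When $n=1$ this collapses to the case $\zeta = 1$, i.e. to the definition of strong $\ell$--independence read directly, which is why I did not need a separate base case beyond $n=0$.)

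The whole argument is short; the one step where a little care is needed is isolating the root of unity $\zeta$ and checking that it is an admissible witness against strong $\ell$--indivisibility — namely that it lies in $K$ and has $\ell$--power order — so that the contradiction with the definition is genuine. It is perhaps worth noting that the hypothesis ``$\ell$ odd or $\zeta_4 \in K$'' does not appear to be used in this induction itself; I would expect it is retained here for uniformity with the surrounding results, where via Proposition~\ref{prop:stay-indiv} it guarantees that strongly $\ell$--independent elements of $K^\times$ remain strongly $\ell$--independent in every cyclotomic layer $K_{\ell^m}$.
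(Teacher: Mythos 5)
Your proof is correct. It differs in organization from the paper's argument: the paper sets $e=\min_i v_\ell(x_i)$, invokes strong $\ell$--independence once to write $a_1^{x_1}\cdots a_r^{x_r}=b^{\ell^e}$ with $b$ strongly $\ell$--indivisible, and then concludes $e\geqslant n$ in one stroke from Lemma~\ref{rem-indivisible}; you instead induct on $n$, peeling off one factor of $\ell$ per step and producing at each step an explicit root of unity $\zeta\in K\cap\mu_{\ell^\infty}$ witnessing the failure of strong $\ell$--indivisibility. The underlying mechanism is the same in both cases --- a power relation among the $a_i$ forces a root of unity to appear, which contradicts the definition --- but your route bypasses Lemma~\ref{rem-indivisible} entirely (essentially re-proving its content inline, $n$ times), whereas the paper's is shorter because that lemma has already isolated the root-of-unity extraction. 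All the delicate points in your version check out: $\zeta$ is indeed an $\ell^{n-1}$-th root of unity lying in $K$, and the contrapositive of the definition of strong $\ell$--independence gives $\ell\mid y_i$ for all $i$. Your observation that the hypothesis ``$\ell$ odd or $\zeta_4\in K$'' is never used is also accurate --- the paper's own proof does not use it either --- and your explanation for its presence (uniformity with Proposition~\ref{prop:stay-indiv}, which is what lets the lemma be applied over the fields $K_{\ell^m}$ later on) is the right one.
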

\begin{proof}Let $a=a_1^{x_1}\cdots a_r^{x_r}\in (K^\times)^{\ell^n}$ and let $e = \min_i(v_\ell(x_i))$ (we suppose that not all $x_i$ are zero because the statement is trivial in that case). Then some integer $x_i\ell^{-e}$ is not divisible by $\ell$.  Since $a_1,\ldots,a_r$ are strongly $\ell$--independent, we deduce that  $a = b^{\ell^e}$ for some strongly $\ell$--indivisible element $b$. Lemma~\ref{rem-indivisible} now implies that $e\leqslant n$.
\end{proof}

\section{Parameters describing the $\ell$-divisibility of $G$} \label{section-parameters}

Let $G$ be a finitely generated subgroup of $K^\times$. For $n\geqslant 0$, we denote by $G_n := G\cap (K^\times)^{\ell^n}$ the subgroup of $G$ consisting of those elements that are $\ell^n$--th powers in $K^\times$. The aim of this section is to describe the group structure of the finite abelian $\ell$--group $G/G_n$.

\subsection{Choosing a good basis}

\begin{lem} \label{lem:fg} Let $\mathcal{G}$ be a finitely generated subgroup of $K^\times$. Then there exists a positive integer $m$ with the following property: if $n\in\mathbb{Z}_{\geqslant 0}$ and $x\in K^\times$ satisfy $x^{\ell^n}\in\mathcal{G}$, then $x^{\ell^m}\in\mathcal{G}$.
\end{lem}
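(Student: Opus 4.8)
The plan is to introduce the \emph{$\ell$-saturation} of $\mathcal{G}$ inside $K^\times$, namely the set
$$\mathcal{G}' := \{x\in K^\times : x^{\ell^n}\in\mathcal{G}\ \text{for some }n\geqslant 0\}\,,$$
which is immediately seen to be a subgroup of $K^\times$ containing $\mathcal{G}$. The lemma will follow once we know that the quotient $\mathcal{G}'/\mathcal{G}$ is a finite abelian $\ell$-group: every element of $\mathcal{G}'/\mathcal{G}$ is by construction killed by a power of $\ell$, so if the quotient is finite we may let $\ell^m$ (with $m\geqslant 1$) be its exponent; then $x^{\ell^n}\in\mathcal{G}$ forces $x\in\mathcal{G}'$ and hence $x^{\ell^m}\in\mathcal{G}$, which is exactly the desired property.

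The key step is therefore to prove that $\mathcal{G}'$ is finitely generated. First I would control the support: choosing finitely many generators $g_1,\dots,g_s$ of $\mathcal{G}$ and letting $S$ be the finite set of places of $K$ consisting of all archimedean places together with the nonzero primes $\p$ of $\OO_K$ with $v_\p(g_j)\neq 0$ for some $j$, we have $\mathcal{G}\subseteq\OO_{K,S}^\times$. If $x\in\mathcal{G}'$, say $x^{\ell^n}\in\mathcal{G}$, then for every finite prime $\p\notin S$ we get $\ell^n v_\p(x)=v_\p(x^{\ell^n})=0$, so $v_\p(x)=0$, and thus $x\in\OO_{K,S}^\times$ as well. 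Hence $\mathcal{G}\subseteq\mathcal{G}'\subseteq\OO_{K,S}^\times$, and by the Dirichlet $S$-unit theorem the group $\OO_{K,S}^\times$ is finitely generated, so its subgroup $\mathcal{G}'$ is finitely generated too.

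Finally I would check finiteness of $\mathcal{G}'/\mathcal{G}$ by a rank count: since each $x\in\mathcal{G}'$ satisfies $x^{\ell^n}\in\mathcal{G}$ for some $n$, the inclusion $\mathcal{G}\hookrightarrow\mathcal{G}'$ becomes an isomorphism after $\otimes_{\Z}\Q$, so $\mathcal{G}$ and $\mathcal{G}'$ have the same rank and $\mathcal{G}'/\mathcal{G}$ is a finitely generated torsion abelian group, i.e.\ finite. Together with the observation that it is an $\ell$-group, this completes the argument. The only substantive input is the finite generation of an $S$-unit group (equivalently, the valuation bound on the support of $\mathcal{G}'$ plus Dirichlet's unit theorem); the rest is formal manipulation of finitely generated abelian groups. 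I expect the extraction of the correct $S$ and the verification that $\mathcal{G}'$ is a subgroup to be the least interesting points, the conceptual heart being simply the recognition that the $\ell$-saturation of $\mathcal{G}$ lives inside a finitely generated group.
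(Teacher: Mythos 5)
Your proof is correct and follows essentially the same route as the paper: both introduce the $\ell$-saturation of $\mathcal{G}$, show it lies in a finitely generated group of $S$-units (the paper via the map to $\mathbb{Z}^{\sharp S}$ with kernel in $\mathcal{O}_K^\times$, you via Dirichlet's $S$-unit theorem directly), and then extract $m$ from finite generation. The only cosmetic difference is that the paper picks $m$ using $\ell^m$-th powers of a finite generating set of the saturation, while you use the exponent of the finite $\ell$-group $\mathcal{G}'/\mathcal{G}$; these are interchangeable.
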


\begin{proof}
Consider the subgroup $\mathcal{H}$ of $K^\times$ consisting of those $x$ satisfying $x^{\ell^n}\in \mathcal{G}$ for some $n\geq 0$.
We claim that $\mathcal{H}$ is finitely generated. Indeed, let $\{g_1,\ldots,g_r\}$ be a generating set for $\mathcal{G}$ and let $S$ be the set of prime ideals of $\mathcal{O}_K$ appearing in the ideal factorisations of the principal fractional ideals $g_1\mathcal{O}_K,\ldots,g_r\mathcal{O}_K$. For any $x\in \mathcal{H}$ the prime ideals appearing in the factorisation of $x\mathcal{O}_K$ lie in $S$, so we have a morphism $\mathcal{H}\to \mathbb{Z}^{\sharp S}$ whose kernel is $\mathcal{H}_0 := \mathcal{H}\cap \mathcal{O}_K^\times$. The groups $\mathcal{H}_0$ and $\mathcal{H}/\mathcal{H}_0 \cong \text{im}(\mathcal{H}\to \mathbb{Z}^{\sharp S})$ are subgroups of $\mathcal{O}_K^\times$ and $\mathbb{Z}^{\sharp S}$ respectively and hence are finitely generated.
Thus so is $\mathcal{H}$, say $\mathcal{H} = \langle h_1,\ldots,h_r\rangle$. 
Take a positive integer $m$ such that $h_1^{\ell^m},\ldots,h_r^{\ell^m}\in\mathcal{G}$. Then 
$x^{\ell^m}\in\mathcal{G}$ for all $x\in\mathcal{H}$, so $m$ is as desired.
\end{proof}

\begin{cor}\label{boundbasis}
Let $\mathcal{G}$ be a finitely generated and torsion--free subgroup of $K^\times$ and let $m$ be an integer as in Lemma \ref{lem:fg}. Then no element of a basis for $\mathcal{G}$ is an $\ell^{m+1}$--st power in $K^\times$.
\end{cor}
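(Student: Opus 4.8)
The plan is to argue by contradiction, reducing an $\ell^{m+1}$-st power condition to an $\ell$-th power condition inside $\mathcal{G}$ itself, where the freeness of $\mathcal{G}$ makes it visibly impossible for a basis element. So suppose that some element $g$ of a basis $g_1,\ldots,g_r$ of $\mathcal{G}$ is an $\ell^{m+1}$-st power in $K^\times$, say $g = x^{\ell^{m+1}}$ with $x\in K^\times$. Then $x^{\ell^{m+1}}\in\mathcal{G}$, so applying Lemma~\ref{lem:fg} with $n=m+1$ yields $x^{\ell^m}\in\mathcal{G}$. Set $y:=x^{\ell^m}$; then $y\in\mathcal{G}$ and $y^\ell = x^{\ell^{m+1}} = g$, so $g$ is an $\ell$-th power of an element of $\mathcal{G}$.

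Next I would use that $\mathcal{G}$, being finitely generated and torsion--free, is a free abelian group of finite rank with $\mathbb{Z}$--basis $g_1,\ldots,g_r$. Write $y = g_1^{a_1}\cdots g_r^{a_r}$ for suitable integers $a_i$ and compare with $g = y^\ell = g_1^{\ell a_1}\cdots g_r^{\ell a_r}$. Uniqueness of coordinates in a free abelian group forces the two exponent vectors to coincide; in particular, if $g = g_i$, then the $i$-th coordinates give $1 = \ell a_i$, which is impossible for an integer $a_i$. This contradiction proves the corollary.

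The argument is short, and the only point worth flagging is that the hypothetical $\ell^{m+1}$-st root $x$ need not itself lie in $\mathcal{G}$; it is precisely Lemma~\ref{lem:fg} that lets us descend to an $\ell$-th root $y$ of $g$ \emph{inside} $\mathcal{G}$, after which freeness finishes the job. I do not expect any further obstacle.
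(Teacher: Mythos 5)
Your proof is correct and follows exactly the paper's argument: apply Lemma~\ref{lem:fg} to descend from an $\ell^{m+1}$-st root in $K^\times$ to an $\ell$-th root inside $\mathcal{G}$, then note that a basis element of a free abelian group cannot be an $\ell$-th power in that group. You merely spell out the final "that is impossible" step, which the paper leaves implicit.
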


\begin{proof}
Let $\{b_1,\ldots,b_r\}$ be a basis for $\mathcal{G}$ and suppose that $b_1 = b^{\ell^{m+1}}$ for some $b\in K^\times$. By definition of $m$ we have $b^{\ell^m}\in \mathcal{G}$ so $b_1$ is an $\ell$-th power in $G$, and that is impossible.
\end{proof}

Recall that we fixed a finitely generated subgroup $G$ of $K^\times$, which we from now on assume to be torsion--free (and hence to be a free $\mathbb{Z}$--module). We write $z = v_\ell(\sharp\Ktors)$. To any $a\in K^\times$ which is not a root of unity we may associate by Lemma \ref{convenience} an integer $d(a)$ such that $a^{\ell^z} = A^{\ell^{z+d(a)}}$ for some strongly $\ell$--indivisible $A$. So to any basis $\mathcal{B}$ of $G$ one can associate the quantity $d(\mathcal{B}) = \sum_{b\in\mathcal{B}} d(b)$. 
Since $\{b^{\ell^z}\mid b\in\mathcal{B}\}$ is a basis for $G^{\ell^z}$ and each $b^{\ell^z}$ is of the form $B^{\ell^{z+d(b)}}$ for some $B\in K^\times$, Corollary \ref{boundbasis} implies that $z+d(b)<m+1$ for any $b\in \mathcal{B}$, where $m$ is  as in Lemma \ref{lem:fg} for $\mathcal{G} = G^{\ell^z}$. In particular, $d(\mathcal{B}) \leqslant r(m-z)$ for any basis $\mathcal{B}$ for $G$.

\begin{thm}\label{thm-def-parameters}
Let $G$ be a torsion--free subgroup of $K^\times$ of finite rank $r>0$. Then there are nonnegative integers $d_1,\ldots,d_r,h_1,\ldots,h_r$ such that 
$G = \langle \B_1^{\ell^{d_1}}\zeta_1,\ldots,\B_r^{\ell^{d_r}}\zeta_r \rangle$ for some strongly $\ell$--independent elements $\B_1,\ldots,\B_r$ of $K^\times$ and for some roots of unity $\zeta_i$ in $K$ of order $\ell^{h_i}$.
\end{thm}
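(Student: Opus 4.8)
The plan is to prove this by induction on the rank $r$, using the quantity $d(\mathcal{B})$ introduced just before the theorem as an induction parameter within each rank. First I would establish the base case $r=1$: given a nonzero non-torsion $a\in G$ generating $G$, apply Lemma~\ref{convenience} to write $a = A^{\ell^{d}}\zeta$ with $A$ strongly $\ell$--indivisible and $\zeta\in K\cap\mu_{\ell^\infty}$ of order $\ell^{h}$; this is exactly the claimed shape. (One should note that $A$ itself could be replaced using the canonical $d(a)$ normalization involving $\ell^z$, but Lemma~\ref{convenience} already gives a strongly $\ell$--indivisible element directly, so the rank-one case is immediate.)

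For the inductive step, suppose $r>1$ and pick a basis $\mathcal{B}=\{b_1,\ldots,b_r\}$ for $G$ minimizing $d(\mathcal{B})=\sum_i d(b_i)$ among all bases; this minimum exists because $d(\mathcal{B})$ is a nonnegative integer bounded above by $r(m-z)$, as observed in the paragraph preceding the theorem. I claim that for such a minimizing basis, each $b_i$ is automatically of the form $B_i^{\ell^{d_i}}\zeta_i$ with the $B_i$ \emph{strongly $\ell$--independent}, not merely individually strongly $\ell$--indivisible. Write $b_i = B_i^{\ell^{d_i}}\zeta_i$ via Lemma~\ref{convenience} where $d_i = d(b_i)$ (after absorbing the $\ell^z$ bookkeeping; more precisely take $d_i$ so that $b_i^{\ell^z}=B_i^{\ell^{z+d_i}}$, and note $\zeta_i$ has $\ell$-power order). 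Suppose the $B_i$ are \emph{not} strongly $\ell$--independent: then some $B_1^{x_1}\cdots B_r^{x_r}$ with not all $x_j$ divisible by $\ell$ fails to be strongly $\ell$--indivisible, i.e.\ some root of unity times it is an $\ell$-th power in $K^\times$. The goal is to use this relation to produce a new basis $\mathcal{B}'$ of $G$ with $d(\mathcal{B}')<d(\mathcal{B})$, contradicting minimality. Concretely, if (after reindexing) $x_1$ is an $\ell$-adic unit, one can perform a unimodular change of basis replacing $b_1$ by a suitable product $b_1^{x_1}b_2^{x_2}\cdots b_r^{x_r}$ times a root of unity; the failure of strong $\ell$--indivisibility of the corresponding $B$-combination means this new first generator picks up an extra factor of $\ell$ in its $d$-value, lowering the total.

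The main obstacle I anticipate is the bookkeeping in this descent argument: carefully checking that the change of basis is unimodular over $\mathbb{Z}$ (so that it really yields a basis of the free module $G$), that the roots of unity $\zeta_i\in K$ absorbed along the way genuinely have $\ell$-power order (roots of unity of order prime to $\ell$ are themselves $\ell$-th powers in $K$, by the remark after the definition of strongly $\ell$--indivisible, so they can be harmlessly folded into the $B_i^{\ell^{d_i}}$ part), and that $d(\mathcal{B}')$ strictly decreases rather than merely not increasing. One must handle the interaction between the exponents $x_j$ and the various $d_i$ correctly — the combination $B_1^{x_1}\cdots B_r^{x_r}$ being an $\ell$-th power up to a root of unity gives divisibility of $b_1^{x_1}\cdots b_r^{x_r}$ (up to roots of unity) by a higher power of $\ell$ than the individual $d_i$ would predict, which is precisely what forces $d$ down. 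An alternative, perhaps cleaner, route would be to bypass the descent and argue directly via the structure theory: choose the $B_i$ first as a maximal strongly $\ell$--independent system inside the saturation of $G$ in $K^\times/\Ktors$, then show $G$ sits inside $\langle B_1,\ldots,B_r\rangle\cdot\Ktors$ with the stated exponents; but the descent argument keeps everything internal to $G$ and reuses the already-established bound on $d(\mathcal{B})$, so I would carry that one out.
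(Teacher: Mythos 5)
There is a genuine gap, and it is the direction of your extremization. You choose a basis $\mathcal{B}$ \emph{minimizing} $d(\mathcal{B})$ and hope to derive a contradiction by producing a basis with strictly smaller $d$. But the mechanism you describe runs the other way: if $B_1^{x_1}\cdots B_r^{x_r}$ (with some $x_j$ an $\ell$-adic unit) fails to be strongly $\ell$--indivisible, then the corresponding combination of the $b_i$ is \emph{more} $\ell$--divisible than the individual $d_i$ predict, so the natural replacement generator has a \emph{larger} $d$-value. You even say it "picks up an extra factor of $\ell$ in its $d$-value" and then conclude this lowers the total --- it raises it. A basis minimizing $d(\mathcal{B})$ is therefore not contradicted by the existence of such a relation, and your descent never gets off the ground. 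The correct argument (and the one the paper gives) takes $\mathcal{B}$ \emph{maximizing} $d(\mathcal{B})$: with $d_j=\max\{d_i : \ell\nmid x_i\}$ and $x_j=1$ after normalization, the element $b_j'=\prod_i b_i^{x_i\ell^{d_j-d_i}}$ equals $(B_1^{x_1}\cdots B_r^{x_r})^{\ell^{d_j}}$ up to a root of unity, hence $d(b_j')>d_j=d(b_j)$; since the coefficient of $b_j$ in $b_j'$ is $1$, swapping $b_j$ for $b_j'$ is a unimodular change of basis and strictly increases $d(\mathcal{B})$, contradicting maximality. Note also that the bound $d(\mathcal{B})\leqslant r(m-z)$ you cite is exactly what is needed for a \emph{maximum} to exist (a minimum exists trivially since $d(\mathcal{B})\geqslant 0$), so the non-trivial preparatory work in Lemma~\ref{lem:fg} and Corollary~\ref{boundbasis} is there precisely to license the choice you did not make.

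Two smaller remarks: your "induction on rank" is vestigial --- the inductive hypothesis is never invoked in your step, and the argument is in fact a direct one for arbitrary $r$; and your parenthetical worry about roots of unity of order prime to $\ell$ is already settled by the remark after the definition of strong $\ell$--indivisibility, so the $\zeta_i$ can indeed be taken of $\ell$-power order without loss. Once you flip the extremum from minimum to maximum and carry out the bookkeeping you already sketched, your proof coincides with the paper's.
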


We say that the $2r$--tuple $(d_1,\ldots,d_r,h_1,\ldots,h_r)$ of nonnegative integers \emph{expresses the $\ell$--divisibility of $G$ in $K$} if the integers $d_1,\ldots,h_r$ are as in the above theorem.

\begin{proof}
We can  choose a basis $\mathcal{B} = \{b_1,\ldots,b_r\}$ of $G$ maximizing the quantity $d(\mathcal{B})$. Take strongly $\ell$--indivisible $\B_i\in K^\times$ and nonnegative integers $d_i=d(b_i)$ such that $b_i^{\ell^z} = \B_i^{\ell^{z+d_i}}$. We claim that $\B_1,\ldots,B_r$ are strongly $\ell$--independent. Indeed, if they were not, we could take integers $x_1,\ldots,x_r$, not all divisible by $\ell$, such that the element $a = \B_1^{x_1}\cdots \B_r^{x_r}$  is not strongly $\ell$--indivisible. Let 
$d_j = \max\{d_i\mid x_i\not\in \ell\mathbb{Z}\}$ and assume without loss of generality that $x_j = 1$. The element $b_j' = \prod_i b_i^{x_i\ell^{d_j-d_i}}$ is equal to $a^{\ell^{d_j}}$ up to some root of unity, so the fact that $a$ is not strongly $\ell$--indivisible implies that $d(b_j') > d_j$. Also note that $b_j'/b_j$ is an element of the subgroup of $G$ generated by $\{b_i\mid i\neq j\}$, so $\mathcal{B}' = (\mathcal{B}\cup\{b_j'\})\setminus\{b_j\}$ is a basis for $G$ satisfying $d(\mathcal{B}') = d(\mathcal{B}) + d(b_j') - d(b_j) > d(\mathcal{B})$, contradicting our choice of $\mathcal{B}$. 
\end{proof}

\begin{exa}\label{single}
Let $a = A^{\ell^d}\zeta$ where $A\in K^\times$ is strongly $\ell$--indivisible, $d$ is a nonnegative integer and $\zeta\in K$ is a root of unity of order $\ell^h$. Then $(d,h)$ expresses the $\ell$--divisibility of $\langle a\rangle$ in $K$. If $\zeta$ has $\ell^d$-th roots in $K$, we may also write $a = B^{\ell^d}$ with $B\in K^\times$ strongly $\ell$--indivisible. Then $(d,0)$ also expresses  the $\ell$--divisibility of $\langle a\rangle$ in $K$. 
\end{exa}

\subsection{The group structure of $G/G_n$}

\begin{thm}\label{thm-GmodGn}
Let $G$ be a torsion--free subgroup of $K^\times$ of finite rank $r>0$ and denote $v_{\ell} (\sharp \Ktors)$ by $z$. We keep the notations of Theorem \ref{thm-def-parameters} and write $b_i = \B_i^{\ell^{d_i}}\zeta_i$.
Let $n$ be a nonnegative integer, and write $\delta_i = \max(n-d_i,0)$. Let $G_n=G\cap (K^\times)^{\ell^n}$ and let
 $H_n$ be the subgroup of $G/G_n$ generated by the classes of $b_1^{\ell^{\delta_1}},\ldots,b_r^{\delta_r}$. Then $H_n$ is a finite cyclic $\ell$--group and we have 
$$v_{\ell} (\sharp H_n) =  \max(h_1-\delta_1,\ldots,h_r-\delta_r,z-n,0)+\min(n-z,0)\,.$$
Moreover, we have $$\frac{G/G_n}{H_n}\cong\bigoplus_{i=1}^r \frac{\mathbb{Z}}{\ell^{\delta_i}\mathbb{Z}}.$$
\end{thm}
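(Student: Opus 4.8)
The plan is to carry out the whole computation inside the free $\Z$--module $G$. Since $\{b_1,\ldots,b_r\}$ is a basis of $G$, the map $\Z^r\to G$ sending $x=(x_1,\ldots,x_r)$ to $\prod_i b_i^{x_i}$ is an isomorphism; write $e_1,\ldots,e_r$ for the standard basis of $\Z^r$. Under this isomorphism the subgroup $G_n$ corresponds to $L_n:=\{x\in\Z^r : \prod_i b_i^{x_i}\in(K^\times)^{\ell^n}\}$, and $H_n$ (being generated by the classes of the $b_i^{\ell^{\delta_i}}$) corresponds to the image in $\Z^r/L_n$ of $M_n:=\ell^{\delta_1}\Z e_1\oplus\cdots\oplus\ell^{\delta_r}\Z e_r$. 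So the whole statement will follow once we identify $L_n$, check that $L_n\subseteq M_n$, and compute $M_n/L_n$.

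The key step is a normal form for $g:=\prod_i b_i^{x_i}$ when $x\neq 0$ (the vector $x=0$ gives $g=1\in(K^\times)^{\ell^n}$ trivially). Writing $g=\big(\prod_i\B_i^{x_i\ell^{d_i}}\big)\cdot\prod_i\zeta_i^{x_i}$ and putting $c:=\min_i(v_\ell(x_i)+d_i)$, every exponent $x_i\ell^{d_i}$ has $\ell$--adic valuation at least $c$, so $\prod_i\B_i^{x_i\ell^{d_i}}=\tilde A^{\ell^c}$ for the element $\tilde A:=\prod_i\B_i^{x_i\ell^{d_i}/\ell^c}$; at least one of the exponents of $\tilde A$ is coprime to $\ell$, so strong $\ell$--independence of the $\B_i$ makes $\tilde A$ strongly $\ell$--indivisible. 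Raising $g=\tilde A^{\ell^c}\prod_i\zeta_i^{x_i}$ to the power $\ell^z$ with $z=v_\ell(\sharp\Ktors)$ kills all the roots of unity, giving $g^{\ell^z}=\tilde A^{\ell^{c+z}}$; Lemma~\ref{rem-indivisible}, applied to the strongly $\ell$--indivisible element $\tilde A$, then shows that $g\in(K^\times)^{\ell^n}$ forces $c\geq n$, while conversely, when $c\geq n$, $g$ and $\prod_i\zeta_i^{x_i}$ differ by an $\ell^n$--th power and hence $g\in(K^\times)^{\ell^n}$ if and only if $\prod_i\zeta_i^{x_i}\in(K^\times)^{\ell^n}$. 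Finally $c\geq n$ is equivalent to $v_\ell(x_i)\geq\max(n-d_i,0)=\delta_i$ for every $i$, i.e.\ to $x\in M_n$; thus $L_n\subseteq M_n$ and $L_n=\{x\in M_n : \prod_i\zeta_i^{x_i}\in(K^\times)^{\ell^n}\}$.

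Given this, the last assertion is immediate: $(G/G_n)/H_n\cong(\Z^r/L_n)/(M_n/L_n)\cong\Z^r/M_n\cong\bigoplus_{i=1}^r\Z/\ell^{\delta_i}\Z$. For $H_n$ itself, the homomorphism $M_n\to K^\times/(K^\times)^{\ell^n}$ sending $x$ to the class of $\prod_i\zeta_i^{x_i}$ has kernel exactly $L_n$, so $H_n\cong M_n/L_n$ is isomorphic to its image, the subgroup $W\cdot(K^\times)^{\ell^n}/(K^\times)^{\ell^n}$ with $W:=\langle\zeta_1^{\ell^{\delta_1}},\ldots,\zeta_r^{\ell^{\delta_r}}\rangle$. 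Since $K\cap\mu_{\ell^\infty}=\mu_{\ell^z}$ is cyclic and $\zeta_i^{\ell^{\delta_i}}$ has order $\ell^{\max(h_i-\delta_i,0)}$, we get $W=\mu_{\ell^\rho}$ with $\rho:=\max(h_1-\delta_1,\ldots,h_r-\delta_r,0)$, and $\rho\leq z$ because each $\zeta_i\in K$ gives $h_i\leq z$; in particular $H_n\cong\mu_{\ell^\rho}/\big(\mu_{\ell^\rho}\cap(K^\times)^{\ell^n}\big)$ is a cyclic $\ell$--group. A short computation---a primitive $\ell^j$--th root of unity lies in $(K^\times)^{\ell^n}$ precisely when $j\leq\max(z-n,0)$---shows $\mu_{\ell^\rho}\cap(K^\times)^{\ell^n}=\mu_{\ell^{\min(\rho,\,\max(z-n,0))}}$, hence $v_\ell(\sharp H_n)=\max\big(\rho-\max(z-n,0),\,0\big)$. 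Verifying that this equals $\max(h_1-\delta_1,\ldots,h_r-\delta_r,z-n,0)+\min(n-z,0)$ is an elementary check, split according to whether $n\leq z$ or $n>z$.

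I expect the normal form of the second paragraph to be the only genuinely delicate point: one must check that $\ell^c$ may be factored out of $\prod_i\B_i^{x_i\ell^{d_i}}$ even when some $d_i<c$ (it may, since $v_\ell(x_i\ell^{d_i})\geq c$ regardless), and---this being what feeds the final computation of $H_n$---that the residual root--of--unity factor is \emph{exactly} $\prod_i\zeta_i^{x_i}$. Everything after that is routine manipulation with cyclic $\ell$--groups.
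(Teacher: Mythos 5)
Your proof is correct and follows essentially the same route as the paper: both arguments work through the surjection $\mathbb{Z}^r\to G/G_n$, show its kernel lies in $\bigoplus_i\ell^{\delta_i}\mathbb{Z}$ by raising to the $\ell^z$-th power and invoking strong $\ell$--independence together with Lemma~\ref{rem-indivisible}, and then identify $H_n$ with a group of roots of unity modulo $(K^\times)^{\ell^n}$. The only (cosmetic) difference is that you pin down the kernel $L_n$ exactly in one pass and deduce both the cyclic structure of $H_n$ and the quotient isomorphism from that single characterization, whereas the paper handles the two conclusions separately (citing Lemma~\ref{lem:power-indep} for the kernel containment rather than rederiving it).
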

\begin{proof}
To study the structure of $H_n$ we identify it with the subgroup of $G\!\cdot\!(K^\times)^{\ell^n}/(K^\times)^{\ell^n}$ which is generated by the classes of the $b_i^{\ell^{\delta_i}}$. 
Since $\delta_i\geqslant n-d_i$, we have $b_i^{\ell^{\delta_i}}\equiv \zeta_i^{\ell^{\delta_i}}$ modulo $(K^\times)^{\ell^n}$. Each $\zeta_i^{\ell^{\delta_i}}$ is a power of $\zeta :=\zeta_j^{\ell^{\delta_j}}$, where $j$ is an index for which $h_j-\delta_j$ is maximal.
Then $H_n$ is generated by the class of $\zeta$ so in particular it is a finite cyclic $\ell$--group. Moreover  $v_{\ell} (\sharp H_n)$ equals the smallest nonnegative integer $m$ such that $\zeta^{\ell^m}\in (K^\times)^{\ell^n}$. The roots of unity in $ (K^\times)^{\ell^n}\cap \mu_{\ell^\infty}$ are those of order dividing $\ell^{\max(z-n,0)}$ while $\zeta$ has order $\ell^{\max(h_j-\delta_j,0)}$. We then have $$v_\ell (\sharp H_n) = \max(\max(h_j-\delta_j,0)-\max(z-n,0),0)$$
and we may recover the formula in the statement because $h_j-\delta_j = \max_i(h_i-\delta_i)$. We are left to prove the isomorphism. Consider the surjective homomorphism $$\varphi:\mathbb{Z}^r\to \frac{G}{G_n}:(x_1,\ldots,x_r)\mapsto b_1^{x_1}\cdots b_r^{x_r}\mod G_n.$$Writing $J = \bigoplus_i \ell^{\delta_i}\mathbb{Z}$ we have $H_n = \varphi(J)$ and we know that the induced map $$\overline{\varphi}:\frac{\mathbb{Z}^r}{\ker(\varphi)+J}\to \frac{\varphi(\mathbb{Z}^r)}{\varphi(J)} = \frac{G/G_n}{H_n}$$ is an isomorphism.  It suffices to show that $\ker(\varphi)\subseteq J$, so suppose that $x_1,\ldots,x_r$ are integers such that $b_1^{x_1}\cdots b_r^{x_r}\in G_n$. Then $\prod_i \B_i^{x_i\ell^{z+d_i}} = \prod_i b_i^{x_i\ell^z}\in (K^\times)^{\ell^{z+n}}$, so Lemma \ref{lem:power-indep} yields that each $x_i\ell^{z+d_i}$ is divisible by $\ell^{z+n}$, i.e., each $x_i$ is divisible by $\ell^{\max(n-d_i,0)} = \ell^{\delta_i}$. This implies that $(x_1,\ldots,x_r)\in J$, so $\ker(\varphi)\subseteq J$.
\end{proof}

If $n>z+\max(d_1,\ldots,d_r)$ then $h_i-\delta_i = h_i-(n-d_i) < 0$, so $H_n$ is trivial. We hence have 

\begin{cor}\label{unique-d}
If $(d_1,\ldots,d_r,h_1,\ldots,h_r)$ expresses the $\ell$--divisibility of $G$ then $$\frac{G}{G_n} \cong \bigoplus_{i=1}^r  \frac{\mathbb{Z}}{\ell^{n-d_i}\mathbb{Z}}$$ for all sufficiently large $n$. In particular the $d$--parameters are uniquely determined by $(K,\ell,G)$ if we suppose (without loss of generality) that $d_1\leqslant\cdots\leqslant d_r$. Moreover, the ranks of the groups $G/G_n$ and $G$ are the same for all sufficiently large $n$.
\end{cor}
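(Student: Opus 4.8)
The plan is to deduce everything from Theorem~\ref{thm-GmodGn} together with the uniqueness part of the classification of finite abelian groups, so I expect no serious obstacle. Fix a presentation $G = \langle \B_1^{\ell^{d_1}}\zeta_1,\ldots,\B_r^{\ell^{d_r}}\zeta_r\rangle$ realizing the tuple $(d_1,\ldots,d_r,h_1,\ldots,h_r)$, and set $z = v_\ell(\sharp\Ktors)$. As noted immediately before the statement, once $n > z + \max_i d_i$ we have $\delta_i = n-d_i$ for all $i$ and $H_n$ is trivial, so the last isomorphism of Theorem~\ref{thm-GmodGn} reads
$$\frac{G}{G_n} \;\cong\; \frac{G/G_n}{H_n} \;\cong\; \bigoplus_{i=1}^r \frac{\mathbb{Z}}{\ell^{n-d_i}\mathbb{Z}}\,,$$
which is the first assertion.

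For uniqueness, suppose $(d_1,\ldots,d_r,h_1,\ldots,h_r)$ and $(d'_1,\ldots,d'_r,h'_1,\ldots,h'_r)$ both express the $\ell$--divisibility of $G$, ordered so that $d_1\leqslant\cdots\leqslant d_r$ and $d'_1\leqslant\cdots\leqslant d'_r$. Taking $n$ larger than $z$ plus the maximum of all the $d$'s and applying the first part to each tuple gives $\bigoplus_i \mathbb{Z}/\ell^{n-d_i}\mathbb{Z} \cong G/G_n \cong \bigoplus_i \mathbb{Z}/\ell^{n-d'_i}\mathbb{Z}$. By uniqueness of the invariant factor decomposition of a finite abelian $\ell$--group, the non-decreasing sequences $(n-d_i)_i$ and $(n-d'_i)_i$ coincide, hence $d_i = d'_i$ for every $i$; in particular the $d$--parameters depend only on $(K,\ell,G)$.

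Finally, for such large $n$ each exponent $n-d_i$ is at least $1$, so every summand $\mathbb{Z}/\ell^{n-d_i}\mathbb{Z}$ is a nontrivial cyclic $\ell$--group; thus $G/G_n$ is a direct sum of exactly $r$ nontrivial cyclic groups, hence is generated by $r$ but no fewer elements, i.e. has rank $r$, which equals $\rank(G)$ since $G$ is free of rank $r$. The only point needing a word of care is that ``sufficiently large'' can be taken uniform across the finitely many comparisons involved — it suffices to require $n$ to exceed $z$ plus the largest $d$--parameter appearing — and that the $h$--parameters play no role once $H_n$ has died.
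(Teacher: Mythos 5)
Your proof is correct and follows essentially the same route as the paper: take $n > z + \max_i d_i$ so that $H_n$ in Theorem~\ref{thm-GmodGn} becomes trivial and $\delta_i = n-d_i$, read off the decomposition, and then invoke uniqueness of the invariant factors of a finite abelian $\ell$--group to pin down the $d_i$ and the rank. The paper leaves the uniqueness and rank steps implicit, so your write-up is if anything slightly more complete.
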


The $h$--parameters are not unique (not even if $G$ has rank $1$, see Example~\ref{single}) however the parameters appearing in Theorem~\ref{theorem2} only depend on $(K,\ell,G)$ and are uniquely determined.

\begin{exa}
Suppose that $G=\langle a \rangle$ where $a = A^{\ell^d}\zeta$ such that $A$ is strongly $\ell$--indivisible and $\zeta$ is a root of unity of order $\ell^h$. Theorem~\ref{thm-GmodGn} implies that for all $n\geqslant 1$ we have $$v_\ell \big(\sharp(G/G_n)\big) = \max(h,z-d,z-n,n-d)+\min(n-z,0)\,.$$
\end{exa}

\section{The degree of Kummer extensions} \label{section-degree}

In this section we want to find a parametric formula for the degree of the Kummer extensions $K_{\ell^m}(\sqrt[\ell^n]{G})/K_{\ell^m}$ where $m\geqslant n>0$. Kummer theory tells us that these degrees are powers of $\ell$, so we want to compute their $\ell$--adic valuation.

\begin{thm}\label{logdeg} Let $G$ be a torsion--free subgroup of $K^\times$ of finite rank $r>0$.
Suppose that $\ell$ is odd or $\zeta_4\in K$. Let $t\geqslant 1$ be the largest integer such that $K_\ell = K_{\ell^t}$. Let $m$ and $n$ be positive integers and suppose that $m\geqslant \max(n,t)$. Then the extension degree $\left[K_{\ell^m}(\sqrt[\ell^n]{G}):K_{\ell^m}\right]$ is a power of $\ell$ and 
 $$\begin{aligned} v_\ell \left[K_{\ell^m}(\sqrt[\ell^n]{G}):K_{\ell^m}\right] = & \max(h_1+n_1,\ldots,h_r+n_r,m)-m + rn - n_1 - \cdots  - n_r \end{aligned}$$ where $(d_1,\ldots,d_r,h_1,\ldots,h_r)$ expresses the $\ell$--divisibility of $G$ in $K$ and $n_i = \min(n,d_i)$.
\end{thm}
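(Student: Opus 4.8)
The plan is to move to the base field $L:=K_{\ell^m}$, use Kummer theory to rewrite the degree as the order of a quotient of $G$, and then quote Theorem~\ref{thm-GmodGn} over $L$. First, since $m\geqslant n$ we have $\mu_{\ell^n}\subseteq\mu_{\ell^m}\subseteq L$, so the Kummer isomorphisms recalled in Section~\ref{sec:kummercyclo} apply with base field $L$ and give
$$\left[K_{\ell^m}(\sqrt[\ell^n]{G}):K_{\ell^m}\right]=\sharp\!\left(\frac{G}{G\cap(L^\times)^{\ell^n}}\right),$$
which is in particular a power of $\ell$. Thus the task reduces to computing $v_\ell$ of the index $[G:G\cap(L^\times)^{\ell^n}]$, and for that I want to apply Theorem~\ref{thm-GmodGn} with $K$ replaced by $L$.

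The key step is to check that the tuple $(d_1,\ldots,d_r,h_1,\ldots,h_r)$ expressing the $\ell$--divisibility of $G$ in $K$ also expresses it in $L$. Writing $G=\langle \B_1^{\ell^{d_1}}\zeta_1,\ldots,\B_r^{\ell^{d_r}}\zeta_r\rangle$ as in Theorem~\ref{thm-def-parameters}, the hypothesis that $\ell$ is odd or $\zeta_4\in K$ allows us to invoke Proposition~\ref{prop:stay-indiv}(b) and the ensuing remark about strongly $\ell$--independent elements: the $\B_i$ remain strongly $\ell$--independent in $L$. Since the $\zeta_i$ are still roots of unity of order $\ell^{h_i}$ in $L$, the same tuple satisfies the conclusion of Theorem~\ref{thm-def-parameters} over $L$ (and the sorted $d$--parameters are in any case forced to be unchanged by Corollary~\ref{unique-d}). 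This step is the heart of the matter and the main obstacle: a priori, passing up the cyclotomic tower could make some product $\B_1^{x_1}\cdots\B_r^{x_r}$ with not all $x_i$ divisible by $\ell$ become $\ell$--divisible, and it is precisely Proposition~\ref{prop:stay-indiv}(b) that rules this out; everything after this is bookkeeping.

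Finally I would apply Theorem~\ref{thm-GmodGn} over $L$. The relevant invariant there is $v_\ell(\sharp(L^\times)_{\tors})$, which equals $m$ by the cyclotomic theory of Section~\ref{sec:kummercyclo} since $m\geqslant t$. With $\delta_i:=\max(n-d_i,0)$, Theorem~\ref{thm-GmodGn} gives
$$v_\ell\!\left(\sharp\frac{G}{G\cap(L^\times)^{\ell^n}}\right)=\max(h_1-\delta_1,\ldots,h_r-\delta_r,\,m-n,\,0)+\min(n-m,0)+\sum_{i=1}^r\delta_i.$$
It then remains to simplify: since $m\geqslant n$, the term $\min(n-m,0)$ equals $n-m$ and the $0$ inside the maximum is redundant; using $\delta_i=n-n_i$ with $n_i=\min(n,d_i)$ one rewrites $\max_i(h_i-\delta_i,\,m-n)+(n-m)$ as $\max(h_1+n_1,\ldots,h_r+n_r,m)-m$, and $\sum_i\delta_i$ as $rn-n_1-\cdots-n_r$. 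Substituting these two identities yields the stated formula.
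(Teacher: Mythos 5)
Your proposal is correct and follows essentially the same route as the paper's own proof: reduce to the index $[G:G\cap(K_{\ell^m}^\times)^{\ell^n}]$ via Kummer theory, use Proposition~\ref{prop:stay-indiv} to see that the parameters still express the $\ell$--divisibility of $G$ in $K_{\ell^m}$, and then apply Theorem~\ref{thm-GmodGn} with $z=m$. Your final simplification to the stated formula is also the same algebraic rewriting the paper performs.
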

\begin{proof}
By Proposition~\ref{prop:stay-indiv}, strongly $\ell$--independent elements of $K$ remain strongly $\ell$--independent in $K_{\ell^m}$, so $(d_1,\ldots,d_r,h_1,\ldots,h_r)$ also expresses the $\ell$--divisibility of $G$ in $K_{\ell^m}$. The degree $\left[K_{\ell^m}(\sqrt[\ell^n]{G}):K_{\ell^m}\right]$ equals the index of $G\cap (K_{\ell^m}^\times)^{\ell^n}$ in $G$, so Theorem~\ref{thm-GmodGn} and the fact that $v_\ell(\sharp(K^\times_{\ell^m})_{\tors}) = m$ yield that this degree is a power of $\ell$ and that, using the notation $\delta_i = \max(n-d_i,0)$, its $\ell$--adic valuation equals $$\max(0,\max_i(h_i-\delta_i+n-m))+\delta_1+\cdots+\delta_n.$$
This is another way to write the formula in the statement.
\end{proof}

We now handle the remaining case not treated by the theorem: $\ell = 2$ and $\zeta_4\notin K$. If $m\geqslant 2$ then one has a formula using parameters expressing the $2$--divisibility of $G$ in $K_4$. The only case left is $m = n = 1$:

\begin{lem}\label{two} We have $[K(\sqrt{G}):K]=e\cdot [K_{4}(\sqrt{G}):K_4]$
where $e=2$ if $G$ contains minus a square in $K^\times$ and $e=1$ otherwise. 
\end{lem}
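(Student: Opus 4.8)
The plan is to compare the two Kummer extensions $K(\sqrt{G})/K$ and $K_4(\sqrt{G})/K_4$ by analysing how the relevant quotient groups behave under the base change $K\rightsquigarrow K_4$. Since $[K(\sqrt{G}):K]$ equals the index of $G\cap(K^\times)^2$ in $G$, and $[K_4(\sqrt G):K_4]$ equals the index of $G\cap(K_4^\times)^2$ in $G$, the claim is equivalent to the statement that the inclusion $G\cap(K^\times)^2\subseteq G\cap(K_4^\times)^2$ has index $2$ precisely when $G$ contains an element of the form $-x^2$ with $x\in K^\times$, and index $1$ otherwise. Equivalently, I want to control the kernel of the natural surjection $G/(G\cap(K^\times)^2)\twoheadrightarrow G/(G\cap(K_4^\times)^2)$.

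First I would observe that $[K_4:K]=2$ (as $\zeta_4\notin K$), and that for any $g\in K^\times$ the element $g$ becomes a square in $K_4$ if and only if $g\in(K^\times)^2$ or $-g\in(K^\times)^2$: indeed $K_4=K(\sqrt{-1})$, and by the Kummer-theoretic fact recalled in Section~\ref{sec:kummercyclo} (applied with $\ell^n=2$), $K(\sqrt g)\subseteq K(\sqrt{-1})$ forces $g=x^2(-1)^s$ for some $x\in K^\times$ and $s\in\{0,1\}$. Hence $G\cap(K_4^\times)^2 = \{g\in G: g\in(K^\times)^2 \text{ or } -g\in (K^\times)^2\}$, and this is a subgroup of $G$ containing $G\cap(K^\times)^2=G_1$. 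The quotient $(G\cap(K_4^\times)^2)/G_1$ is therefore killed by $2$ (if $-g$ is a square then $(-g)^2=g^2$ shows $g^2\in G_1$ trivially, but more to the point two elements whose negatives are squares have product with square times square $=$ square, so the quotient is an $\F_2$-vector space), and in fact it is either trivial or of order $2$: if $-g_1=x_1^2$ and $-g_2=x_2^2$ with $g_1,g_2\in G$, then $g_1g_2=(x_1x_2)^2\cdot(-1)^{-1}\cdot(-1)^{-1}\cdot\ldots$ — more cleanly, $g_1/g_2 = (x_1/x_2)^2$, so $g_1$ and $g_2$ lie in the same coset of $G_1$. This gives $[G\cap(K_4^\times)^2 : G_1]\le 2$, with equality exactly when some $g\in G$ has $-g\in(K^\times)^2$, i.e. when $G$ contains minus a square in $K^\times$.

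Combining the two index computations, $[K(\sqrt G):K]=[G:G_1]$ and $[K_4(\sqrt G):K_4]=[G:G\cap(K_4^\times)^2]=[G:G_1]/[G\cap(K_4^\times)^2:G_1]$, so $[K(\sqrt G):K] = [G\cap(K_4^\times)^2:G_1]\cdot[K_4(\sqrt G):K_4]$, and by the previous paragraph the leading factor is $e$ as defined. I expect the main obstacle to be the bookkeeping in showing the index $[G\cap(K_4^\times)^2:G_1]$ is exactly $2$ (not larger) and correctly matching ``$-g$ is a square in $K^\times$ for some $g\in G$'' with the phrasing ``$G$ contains minus a square in $K^\times$'': one must be careful that $g\in G$ with $-g=x^2$ is the same as $-x^2\in G$, which is immediate, and that the case $g\in(K^\times)^2$ already is absorbed into $G_1$ so contributes nothing new. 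A minor subtlety also worth a line: one should check $-1\notin(K^\times)^2$ is automatic from $\zeta_4\notin K$, so the two conditions ``$g$ a square'' and ``$-g$ a square'' on a fixed $g$ are genuinely disjoint, which is what makes the quotient have order exactly $2$ rather than collapsing.
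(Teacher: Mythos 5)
Your proof is correct and follows essentially the same route as the paper's: both reduce the ratio of degrees to the index $[G\cap(K_4^\times)^2:G\cap(K^\times)^2]$, identify elements of $G\cap(K_4^\times)^2$ as squares or minus squares in $K^\times$ via the containment $K(\sqrt{g})\subseteq K(\sqrt{-1})$, and bound the quotient by $2$ using the fact that a product of two minus-squares is a square. The only cosmetic difference is that the paper phrases the middle step through the notion of strong $2$--indivisibility, whereas you apply the Kummer criterion directly.
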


\begin{proof}
Kummer theory tells us that the quotient $[K(\sqrt{G}):K]/[K_4(\sqrt{G}):K_4]$ equals the cardinality of the quotient group $$\mathcal{G} = \frac{G\cap (K_4^\times)^2}{G\cap (K^\times)^2}.$$ To compute the cardinality of $\mathcal{G}$, we first note that any element of $G\cap (K_4^\times)^2$ is not strongly $2$--indivisible in $K$. Indeed, if $a\in G\cap (K_4^\times)^2$ would be strongly $2$--indivisible in $K$, then $K(\sqrt{a})$ would be a quadratic extension of $K$ contained in $K_4$, so $K(\sqrt{a}) = K_4 = K(\sqrt{-1})$ and in particular Kummer theory implies that 
 $a = -x^2$ for some $x\in K^\times$,
 contradicting the assumption that $a$ is strongly $2$--indivisible in $K$.

So any element of $G\cap (K_4^\times)^2$ is either a square in $K^\times$ or minus a square in $K^\times$. In the first case the element yields the trivial class in $\mathcal{G}$. This already shows that $e = 1$ if $G$ does not contain minus a square in $K^\times$ and that otherwise $e\geqslant 2$ (indeed, minus a square can not be a square in $K^\times$ because $\zeta_4\not\in K$). Now suppose that $x$ and $y$ are nontrivial elements of $\mathcal{G}$. Since they come from minus squares in $K^\times$, their products $x^2$ and $xy$ are trivial in $\mathcal{G}$, and hence $x = y$. This shows that there can be at most one nontrivial element in $\mathcal{G}$ and hence $e\leqslant 2$. 
\end{proof}

\begin{exa} 
With the notation of Theorem~\ref{logdeg}, suppose that $G=\langle a \rangle$ where $a = A^{\ell^d}\zeta$ such that $A$ is strongly $\ell$--indivisible and $\zeta$ is a root of unity of order $\ell^h$. Then we have 
 $$\begin{aligned}v_\ell \left[K_{\ell^m}(\sqrt[\ell^n]{a}):K_{\ell^m}\right] & =  \max(h+\min(n,d),m)-m + n - \min(n,d) \\ & = \max(h+n-m,n-d,0)\,.\end{aligned}$$
\end{exa}

\begin{exa}
With the notation of Theorem~\ref{logdeg}, suppose that $\zeta_\ell\notin K$. Then $h_i = 0$ hence $h_i+n_i \leqslant m$ and we have: 
$$v_\ell \left[K_{\ell^m}(\sqrt[\ell^n]{G}):K_{\ell^m}\right] = m-m + rn - n_1 - \cdots  - n_r = \sum_{i=1}^r \max(n-d_i,0).$$
\end{exa}

\begin{exa} With the notation of Theorem~\ref{logdeg}, suppose that $G$ has a basis consisting of strongly $\ell$--independent elements. Then $d_i=h_i=0$ hence
the extension $K_{\ell^m}(\sqrt[\ell^n]{G})$ of $K_{\ell^m}$ has the largest possible degree, namely $\ell^{nr}$.
\end{exa}

\section{The existence of the density} \label{section-existence}

The natural density of a set $S$ of primes of $K$ is $\lim_{n\rightarrow \infty} \sharp (S\cap \mathcal P_n)/ \sharp (\mathcal P_n)$, where $\mathcal P_n$ is the set of primes of $K$ having residue field of size at most $n$. By the upper and lower density we respectively mean the limit inferior and superior: these exist and if they coincide then the density is well-defined.

If $G\subset K^\times$ is a finitely generated subgroup, we neglect the finitely many primes $\p$ of $K$ for which $(G \bmod \p)$ is not well-defined or contains $(0 \bmod \p)$. We write $\D(G)$ for the density of the set of primes $\p$ of $K$ such that $(G \bmod \p)$ has exponent (equivalently, size) prime to $\ell$.

\begin{thm}\label{thmdensity} Let $G\subset K^\times$ be a finitely generated subgroup. The set of primes $\p$ of $K$ such that $(G \bmod \p)$ has size prime to $\ell$ has a natural density, and this is given by the formula:
\begin{equation*}
\D(G)=\sum_{n\geqslant 0}\;\big(\; [K_{\ell^n}(\sqrt[\ell^{n}]{G}):K]^{-1}-[K_{\ell^{n+1}}(\sqrt[\ell^{n}]{G}):K]^{-1}\; \big)\,.
\end{equation*}
\end{thm}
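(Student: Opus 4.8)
The plan is to reduce the condition ``$\sharp(G \bmod \p)$ is prime to $\ell$'' to a Chebotarev condition in the tower of fields $K_{\ell^{n+1}}(\sqrt[\ell^n]{G})$. First I would fix a prime $\p$ (away from the finite bad set, and also away from $\ell$, which is a density-zero set). Let $q = \sharp k_\p$; then $\sharp(G \bmod \p)$ is the order of the image of $G$ in $k_\p^\times$, a cyclic group of order $q-1$. Writing $s = v_\ell(q-1)$, the $\ell$-part of $\sharp(G \bmod \p)$ is trivial if and only if every element of $G$ becomes an $\ell$-th power modulo $\p$ when $s \geq 1$, and more precisely: $\ell \nmid \sharp(G \bmod \p)$ if and only if $(G \bmod \p) \subseteq (k_\p^\times)^{\ell^s}$. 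Now $s \geq n+1$ is equivalent to $\mu_{\ell^{n+1}} \subseteq k_\p$, i.e. $\p$ splits completely in $K_{\ell^{n+1}}$; and given $s = n$ (equivalently $\p$ splits completely in $K_{\ell^n}$ but not in $K_{\ell^{n+1}}$), the condition $(G \bmod \p) \subseteq (k_\p^\times)^{\ell^n} = (k_\p^\times)^{\ell^s}$ holds if and only if $\p$ splits completely in $K_{\ell^n}(\sqrt[\ell^n]{G})$ — here I use that reduction mod $\p$ induces an isomorphism on the relevant $\ell$-power-order torsion/Kummer data, so that a local $\ell^n$-th root of each $g \in G$ exists precisely when the global Kummer extension splits at $\p$. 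Thus for each $n \geq 0$, the set of $\p$ with $v_\ell(\sharp(G \bmod \p))$ forcing ``$s=n$ and the power condition holds'' is, up to density zero, the set of $\p$ splitting completely in $K_{\ell^n}(\sqrt[\ell^n]{G})$ but not in $K_{\ell^{n+1}}$.

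The key step is then to assemble these into the series. The set $\mathcal S$ of $\p$ with $\ell \nmid \sharp(G \bmod \p)$ is, up to a density-zero set, the disjoint union over $n \geq 0$ of the sets $\mathcal S_n$ consisting of primes that split completely in $K_{\ell^n}(\sqrt[\ell^n]{G})$ but not in $K_{\ell^{n+1}}$. Note $K_{\ell^{n+1}} \subseteq K_{\ell^{n+1}}(\sqrt[\ell^n]{G})$, and a prime splitting completely in $K_{\ell^n}(\sqrt[\ell^n]{G})$ splits completely in $K_{\ell^{n+1}}$ exactly when it splits completely in the compositum $K_{\ell^{n+1}}(\sqrt[\ell^n]{G})$. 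By Chebotarev, $\dens \mathcal S_n = [K_{\ell^n}(\sqrt[\ell^n]{G}):K]^{-1} - [K_{\ell^{n+1}}(\sqrt[\ell^n]{G}):K]^{-1}$, and each term is nonnegative. For existence of the density of $\mathcal S$ itself (not just of each $\mathcal S_n$), I would argue as follows: the partial unions $\bigcup_{n < N} \mathcal S_n$ have density equal to the $N$-th partial sum, giving the lower density bound $\liminf \geq \sum_{n \geq 0} \dens \mathcal S_n$; for the upper bound, observe that $\mathcal S \setminus \bigcup_{n<N}\mathcal S_n$ is contained (up to density zero) in the set of primes splitting completely in $K_{\ell^N}$, which has density $[K_{\ell^N}:K]^{-1} \to 0$ as $N \to \infty$ since $K_{\ell^\infty}/K$ is infinite. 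Hence $\limsup \leq \sum_{n<N}\dens\mathcal S_n + [K_{\ell^N}:K]^{-1}$ for every $N$, and letting $N \to \infty$ gives the matching upper bound, so the density exists and equals the stated series.

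The main obstacle I anticipate is the careful ``local–global'' comparison in the first paragraph: showing that for all but finitely many $\p$, the existence of $\ell^n$-th roots of the elements of $G$ in $k_\p$ is detected exactly by the splitting behaviour of the global field $K_{\ell^n}(\sqrt[\ell^n]{G})$ — one must check that reduction mod $\p$ sends a basis of strongly $\ell$-independent elements of $G$ (and the relevant roots of unity) to elements with the same $\ell$-divisibility structure in $k_\p^\times$, which requires excluding the primes ramifying in $K_{\ell^n}(\sqrt[\ell^n]{G})$ or dividing indices, and invoking that the reduction map is injective on the finite group $\mu_{\ell^\infty}(\bar K) \cap K_{\ell^n}$. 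Once this dictionary is in place, the combinatorics of the telescoping tower and the Chebotarev/convergence argument are routine.
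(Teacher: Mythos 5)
Your proposal is correct and follows essentially the same route as the paper: decompose the set according to $v_\ell(\sharp k_\p^\times)=n$, identify each piece with the primes splitting completely in $K_{\ell^n}(\sqrt[\ell^n]{G})$ but not in $K_{\ell^{n+1}}(\sqrt[\ell^n]{G})$, apply Chebotarev to each piece, and control the tail by the primes splitting completely in $K_{\ell^N}$. The local--global step you flag as the main obstacle is handled exactly as you suggest, by discarding the primes ramifying in the Kummer extensions; no finer analysis of strongly $\ell$-independent elements is needed there.
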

\begin{proof} This proof is inspired from \cite[Theorem 3.2]{Jones_Rouse}.
We neglect the finitely many primes $\p$ of $K$ such that $\p$ ramifies in the extension $K_{\ell^n}(\sqrt[\ell^{n}]{G})$ for some $n\geqslant 0$. Let $S$ be the set of primes $\p$ of $K$ such that $(G \bmod \p)$ has size prime to $\ell$ or, equivalently, such that for every $a\in G$ the reduction $(a \bmod \p)$ has some $\ell^n$-th root in the residue field $k_\p$ for every $n\geqslant 0$. 
We can write $S$ as a disjoint union of sets $S_n$ for $n\geqslant 0$ by intersecting $S$ with the primes $\p$ of $K$ such that $v_{\ell}(\sharp k_\p^\times)=n$. The set  $S_n$ consists of the primes of $K$ that split completely in $K_{\ell^n}(\sqrt[\ell^{n}]{G})$ but not in $K_{\ell^{n+1}}(\sqrt[\ell^{n}]{G})$. The Chebotarev Density Theorem implies that $S_n$ has a natural density, given by: 
$$\dens(S_n)= [K_{\ell^n}(\sqrt[\ell^{n}]{G}):K]^{-1}-[K_{\ell^{n+1}}(\sqrt[\ell^{n}]{G}):K]^{-1}\,.$$ 
We are left to show that 
$\dens(S)$ exists  and is equal to the sum of $\dens(S_n)$ for $n\geq 0$.
Since the $S_n$ are pairwise disjoint subsets of $S$, the lower density of $S$ is at least the sum of $\dens(S_n)$. To show that the upper density of $S$ is at most the sum of $\dens(S_n)$, remark that $\cup_{n\geqslant N} S_n$ is contained in the set of primes of $K$ which split completely in $K_{\ell^{N}}$, and that this set has a density going to zero for $N$ going to infinity, by the Chebotarev Density Theorem.
\end{proof}

\begin{rem} For every $n\geqslant 1$, the set of primes $\p$ of $K$ such that the size of $(G \bmod \p)$ has $\ell$-adic valuation $n$ has a natural density, that is  equal to $\D(G^{\ell^{n}})-\D(G^{\ell^{n-1}})$.
\end{rem}

By the inclusion--exclusion principle we also have:

\begin{rem}
Let $g_i\in K^{\times}$ and let $n_i\geqslant 0$
for $i=1,\ldots, r$. The set of primes $\p$ of $K$ such that the multiplicative order of $(g_i \bmod \p)$ has $\ell$-adic valuation $n_i\,$ for every $i$ has a natural density, and this is given by the formula:
$$\D( \langle g_1^{\ell^{n_1}}, \ldots, g_r^{\ell^{n_r}} \rangle )+\sum_{k = 1}^{r} (-1)^{k}\cdot  \! \!\!\!\! \sum_{1 \leqslant i_{1} < \cdots < i_{k} \leqslant r} \D( \langle g_1^{\ell^{n_1}}, \ldots, g_r^{\ell^{n_r}}, g_{i_{1}}^{\ell^{n_{i_{1}}-1}}, \ldots, g_{i_{k}}^{\ell^{n_{i_{k}}-1}}\rangle)\,. $$
 \end{rem}

\section{A closed-form expression for the density}\label{section-proof}

While proving Theorem~\ref{theorem2}, we also show  the following:

\begin{rem}\label{tau} With the notation of Theorems~\ref{theorem2}~and~\ref{thm-def-parameters}, the parameter $\tau$ is the maximum of $\{t\}\cup\{h_i+d_i :h_i> 0\}$. In particular, $t\leqslant \tau \leqslant t+\max(d_1,\ldots,d_r)$.
\end{rem}

\emph{Proof of Theorem~\ref{theorem2}:} Let $(d_1,\ldots,d_r, h_1,\ldots,h_r)$ be parameters for the $\ell$--divisibility of $G$ in $K$. We may assume without loss of generality that $d_1\leqslant\cdots\leqslant d_r$. Remark~\ref{unique-d} shows that (b) holds. 
We want to compute the infinite sum given in  Theorem~\ref{thmdensity}. The summand for $n=0$ equals $1-{[K_\ell:K]}^{-1}$ because the Kummer extensions are trivial. For $1\leqslant n<t$, the $n$--th summand disappears because $K_{\ell^n} = K_{\ell^{n+1}}$. We  are left with the following expression:
$$\D(G) = 1-\frac{1}{[K_\ell:K]} + \sum_{n\geqslant t} \left([K_{\ell^n}(\sqrt[\ell^{n}]{G}):K]^{-1}-[K_{\ell^{n+1}}(\sqrt[\ell^{n}]{G}):K]^{-1}\right).$$

Let $m\geqslant n\geqslant t$. By Theorem~\ref{logdeg} and the formula $[K_{\ell^m}:K] = \ell^{m-t}[K_\ell:K]$ we have $[K_{\ell^m}(\sqrt[\ell^{n}]{G}):K] = [K_\ell:K]\cdot \ell^{L(m,n)}$, where 
$$L(m,n) = \max(h_1+n_1,\ldots,h_r+n_r,m)-t+\sum_{i=1}^r \max(n-d_i,0)$$
in which $n_i = \min(n,d_i)$. 
Define $\delta(n)=L(n+1,n)-L(n,n)$ for $n\geqslant t$. Note that $\delta(n) = 0$ if $n< \max_i(h_i+n_i)$ and that otherwise we have $$L(n,n) = n-t+\sum_i \max(n-d_i,0) = L(n+1,n)-1.$$ If we denote by $\Delta$ the set of integers $n\geqslant t$ such that $\delta(n) \neq 0$, then we have
 $$\D(G) = 1 - \frac{1}{[K_\ell:K]} + \frac{\ell^{t-1}(\ell-1)}{[K_\ell:K]}\sum_{n\in \Delta} \ell^{-n-\sum_{i=1}^{r} \max(n-d_i,0)}\,.$$

We claim that $\Delta = \mathbb{Z}\cap  [\tau,+\infty)$, where $\tau = \max(\{t\}\cup\{h_i+d_i :h_i> 0\})$. For one inclusion: if $n\in \Delta$ then the inequalities $n\geqslant t$ and $n\geqslant h_i+n_i$ hold for each $i$. If $h_i> 0$, we must have $n_i=d_i$ thus $n\geqslant h_i+d_i$ holds and we deduce $n\geqslant \tau$. For the other inclusion, if $n\geqslant\tau$ then $n\geqslant t$, so we are left to show $n\geqslant h_i+n_i$. If $h_i = 0$ this is trivial, and if $h_i> 0$ we have $n\geqslant \tau\geqslant h_i+d_i$. The claim is proven.

We have shown that $\tau$ is the smallest  integer $n\geqslant 1$ satisfying $n\geqslant t$ and $\delta(n)\neq 0$, which is equivalent to $n\geqslant t$ and $K_{\ell^{n+1}}(\sqrt[\ell^n]{G})\neq K_{\ell^{n}}(\sqrt[\ell^n]{G})$. So $\tau$ is the smallest integer $n\geqslant 1$ such that $\zeta_{\ell^{n+1}}\not\in K_{\ell^n}(\sqrt[\ell^n]{G})$. Remark~\ref{tau} and (a) are proven, and we have $\tau\leqslant \tau_1\leqslant\cdots\leqslant \tau_r$.

For any subset $J$ of $\mathbb{R}$ we write $\sigma_J = 0$ if $J\cap\mathbb{Z}=\emptyset$ and otherwise $$\sigma_J = \sum_{n\in J\cap\mathbb{Z}} \ell^{-n-\sum_{i=1}^{r} \max(n-d_i,0)}$$ We want to compute $\sigma_{[\tau,+\infty)}$. Note that the following equalities hold: $$\sigma_{[\tau,\tau_1)} = \frac{\ell^{-\tau}-\ell^{-\tau_1}}{1-\ell^{-1}}\qquad\text{and}\qquad \sigma_{[\tau_r,+\infty)} = \ell^{d_1+\cdots+d_r}\frac{\ell^{-(r+1)\tau_r}}{1-\ell^{-r-1}}.$$
Indeed, the first one is true if $\tau = \tau_1$ and otherwise $\tau \neq \tau_1$ implies that $\tau < d_1$ and hence any $n\in[\tau,\tau_1)$ satisfies $\sum_i \max(n-d_i,0) = 0$. The second one is true because any $n\geq\tau_r$ satisfies $n\geqslant \tau_r\geqslant d_i$ for any $i$, so $\sum_i \max(n-d_i,0) = rn-d_1-\cdots-d_r$. Similarly, for any $1\leqslant j<r$ we have $$\sigma_{[\tau_j,\tau_{j+1})} = \ell^{d_1+\cdots+d_j}\frac{\ell^{-(j+1)\tau_j}-\ell^{-(j+1)\tau_{j+1}}}{1-\ell^{-j-1}}$$ because this is true if $\tau_j = \tau_{j+1}$ and otherwise we have $\tau_{j+1} = d_{j+1}$, so $\sum_i \max(n-d_i,0) = jn-d_1-\cdots-d_j$ for any $n\in[\tau_j,\tau_{j+1})$. Adding up the contributions we find that
$$\begin{aligned}\sigma_{[\tau,+\infty)} & = \frac{\ell^{-\tau}-\ell^{-\tau_1}}{1-\ell^{-1}}+\ell^{d_1+\cdots+d_r}\frac{\ell^{-(r+1)\tau_r}}{1-\ell^{-r-1}}+\sum_{s=1}^{r-1}\ell^{d_1+\cdots+d_s}\frac{\ell^{-(s+1)\tau_s}-\ell^{-(s+1)\tau_{s+1}}}{1-\ell^{-s-1}} \\ & = \frac{\ell^{-\tau}}{1-\ell^{-1}}+\sum_{s=1}^r \ell^{d_1+\cdots+d_s}\frac{\ell^{-(s+1)\tau_s}}{1-\ell^{-s-1}} - \sum_{s=0}^{r-1}\ell^{d_1+\cdots+d_{s+1}-d_{s+1}}\frac{\ell^{-(s+1)\tau_{s+1}}}{1-\ell^{-s-1}} \\ & = \frac{\ell^{-\tau}}{1-\ell^{-1}}+\sum_{s=1}^r \ell^{d_1+\cdots+d_s}\left(\frac{\ell^{-(s+1)\tau_s}}{1-\ell^{-s-1}} - \frac{\ell^{-s\tau_s-d_s}}{1-\ell^{-s}}\right)\end{aligned}$$thus recovering the formula of (c).
\hfill$ \square$

Note, since $\sigma_{[\tau,+\infty)}$ is strictly positive then the density $\D(G)$ is greater than $1 - \frac{1}{[K_\ell:K]}$. The formula of (c) shows that the density is smaller than one: the bracketed difference inside the sum is strictly positive so $\D(G)$ is smaller than  
$$1 - \frac{1}{[K_\ell:K]} + \frac{\ell^{t-1}(\ell-1)}{[K_\ell:K]} \frac{\ell^{-\tau}}{1-\ell^{-1}}=1 - \frac{1}{[K_\ell:K]} + \frac{\ell^{t-\tau}}{[K_\ell:K]}\leq 1 \,.$$

\begin{proof}[Proof of Theorem~\ref{theorem3}]
In the formula of Theorem~\ref{thmdensity} for $\ell=2$, the summand for $n=0$ gives no contribution because $K_2= K$, so we have:
$$D_{K,2}(G)=\sum_{n\geqslant 1}\;\big(\; [K_{2^n}(\sqrt[2^{n}]{G}):K]^{-1}-[K_{2^{n+1}}(\sqrt[2^{n}]{G}):K]^{-1}\; \big)\,.$$
Over $K_4$ we analogously have:
$$ D_{K_4,2}(G)=\sum_{n\geqslant 2}\;\big(\; [K_{2^n}(\sqrt[2^{n}]{G}):K_4]^{-1}-[K_{2^{n+1}}(\sqrt[2^{n}]{G}):K_4]^{-1}\; \big)\,.$$
Since $[K_4:K]=2$ holds, the difference  $D_{K,2}(G)-\frac{1}{2} D_{K_4,2}(G)$ is exactly the summand for $n=1$ of $D_{K,2}(G)$, namely 
$[K(\sqrt{G}):K]^{-1}-\frac{1}{2}[K_{4}(\sqrt{G}):K_4]^{-1}$. We conclude by Lemma~\ref{two}.
\end{proof}

\section{Heuristics and special cases}\label{specialheuristics}

\subsection{Heuristics}\label{heuristics}
We keep the notation of Theorem~\ref{theorem2}, and explain the formula for the density with heuristics,
by assuming that the $\ell$--part of the reductions of strongly $\ell$--independent elements of $K^\times$ behave ``randomly''.
Let $S$ be the set of primes $\p$ of $K$ such that the size of $(G\bmod \p)$ is well-defined and is prime to $\ell$. For $n\geqslant 0$ let $S_n=S\cap P_n$, where $P_n$ consists of the primes $\p$ of $K$ such that $v_{\ell}(\sharp k_\p^\times)=n$.
As seen in the proof of 
Theorem~\ref{thmdensity}, we have:
$$\D(G) = \dens(S)=\sum_{n\geqslant 0}\dens(S_n)\,.
$$
We have computed $\dens(S_0)=1-[K_\ell:K]^{-1}$, and in fact $S_0=P_0$ holds.
For $1\leq n< t$ we have computed $\dens(S_n)=0$, and this can be explained by noticing that $P_n=\emptyset$.
For $n\geq t$ we have computed (see the proof of Theorem~\ref{theorem2}) that   
$$\dens(S_n) =\frac{\ell^{t-1-n}(\ell-1)}{[K_\ell:K]}\cdot \delta(n)\cdot \ell^{-\sum_i \max(n-d_i,0)}$$ where $\delta(n) = 1$ if $n\in \Delta$ and $\delta(n) = 0$ otherwise. The first factor is $\dens(P_n)$ by the Chebotarev Density Theorem. To understand $\dens(S_n)$, we conclude by argumenting that for $\p\in P_n$ the ``probability'' that $\p\in S$ is the product of the remaining two factors.

By Theorem~\ref{thm-def-parameters}, there is a basis of $G$ given by $b_i=\B_i^{\ell^{d_i}}\zeta_i$ for $i=1,\ldots, r$, where $B_1,\ldots,B_r$ are strongly $\ell$--independent and $\zeta_i$ is a root of unity of order $\ell^{h_i}$. We are supposing  $v_{\ell}(\sharp k_\p^\times)=n\geq t$ and we want the order of $(b_i \bmod\p)$ to be prime to $\ell$ for every $i$. We distinguish two cases: 

\begin{itemize}
\item If $h_i=0$, we want the $\ell$--part of $(B_i \bmod\p)$ to have order at most $\ell^{d_i}$: the probability is $\ell^{- \max(n-d_i,0)}$ because if $n\leqslant d_i$ it is $1$ while if $n>d_i$ there are $\ell^{d_i}$ favorable outcomes over $\ell^{n}$ outcomes.
\item If $h_i >0$ we want the $\ell$--part of $(B_i \bmod\p)$ to be some $\ell^{d_i}$-th root of 
$(\zeta^{-1}_i \bmod\p)$. The probability is $0$ if $h_i+d_i>n$ (there are no such roots in $k_\p^\times$) while if $h_i+d_i\leqslant n$ the probability is $\ell^{- \max(n-d_i,0)}=\ell^{d_i-n}$ (there are $\ell^{d_i}$ such roots among $\ell^n$ elements).
\end{itemize}

The joint probability is then $\ell^{- \sum_i \max(n-d_i,0)}$ unless $h_i+d_i> n$ for some $i$ such that $h_i> 0$ holds, in which case the probability is zero. Since $n\geq t$ and by Remark~\ref{tau}, the probability is zero if and only if $n\notin \Delta=\mathbb{Z}\cap  [\tau,+\infty)$ therefore we exactly need the factor $\delta(n)$. We have recovered the formula of Theorem~\ref{theorem2}.

\subsection{Special cases}\label{specialcases} Under some additional conditions, the formula of Theorem~\ref{theorem2} simplifies:

{\bf Rank $1$:} Suppose that $G=\langle a \rangle$ where $a = A^{\ell^d}\zeta$ such that $A$ is strongly $\ell$--indivisible and $\zeta$ is a root of unity of order $\ell^h$.
If $h>0$ then in particular $K_\ell = K$ holds and we have $\tau = \max(t,h+d)$ and $\tau_1 = \max(\tau,d)=\tau$ so Theorem~\ref{theorem2} gives:
$$\D(G) = \ell^{t+d-2\tau+1}/(\ell+1)\,.$$
If $h = 0$ then $\tau = t$ and $\tau_1 = \max(\tau,d)= \max(t, d)$ so we have:
$$\D(G) = 1-\frac{1}{[K_\ell:K]}\left(\ell^{\min(0,t-d)}-\frac{\ell^{1-|t-d|}}{\ell+1}\right)\,.$$
In particular, if $K_\ell = K$ and $h=0$ hold, we have: $$\D(G) = \left\{\begin{array}{ll} \frac{\ell^{1+d-t}}{\ell+1} & \text{if }d\leqslant t \\ 1-\frac{\ell^{t-d}}{\ell+1} & \text{if }d> t \end{array}\right.\,.$$
If $K_\ell = K$ and $h>0$ hold, the parameters $(d, h)$ may be replaced by $(d,0)$ as soon as $h+d\leqslant t$ because in this case $\zeta$ is an $\ell^d$--th power. The two choices for the parameters give of course the same density.\smallskip

{\bf The case $\tau = t$:} We have $\ell^{t-1}(\ell-1)\ell^{-\tau} = 1-\ell^{-1}$ hence Theorem~\ref{theorem2} gives: $$\D(G) = 1-\frac{\ell^{t-1}(\ell-1)}{[K_\ell:K]}\sum_{i=1}^r \ell^{d_1+\cdots+d_i-i\tau_i}\left(\frac{\ell^{-d_i}}{1-\ell^{-i}}-\frac{\ell^{-\tau_i}}{1-\ell^{-i-1}}\right).$$
By Remark~\ref{theorem2}, we have $\tau = t$ as soon  as $h_i+d_i\leqslant t$ holds for every $i$ such that $h_i> 0$. This is the case if $h_i=0$ for every $i$ so for example if we have $K_\ell \neq K$.\smallskip

{\bf The case $h_i=0$ and $d_i\geqslant t$ for all $i$:} We have $\tau = t$ and $d_i= \tau_i$ so the formula of the previous case becomes:
$$\D(G) = 1-\frac{\ell^{t-1}(\ell-1)}{[K_\ell:K]}\sum_{i=1}^r \ell^{d_1+\cdots+d_{i-1}-id_i}\left(\frac{1}{1-\ell^{-i}}-\frac{1}{1-\ell^{-i-1}}\right).$$
Since $d_1+\cdots+d_{i-1}-id_i\leq -d_i$ holds, if all  $d_i$ are large then the density is close to $1$. This has to be expected because under the given assumptions we have $G=H^{\ell^N}$ for some subgroup $H$ of $K^\times$ and for some large positive integer $N$.\smallskip

{\bf The case $G\cap (K^\times)^\ell = G^\ell$:}  This assumption means that $G$ has a basis consisting of strongly $\ell$--independent elements, so we have $d_i=h_i=0$ for every $i$. By Remark~\ref{tau}, we then have $\tau=\tau_i=t$.
In the formula of Theorem~\ref{theorem2} we have a telescopic sum
 $$1-\frac{1}{[K_\ell:K]}+\frac{\ell^{t-1}(\ell-1)}{[K_\ell:K]}\left(\frac{\ell^{-t}}{1-\ell^{-1}}-\sum_{i=1}^r \frac{\ell^{-it}}{1-\ell^{-i}}-\frac{\ell^{-(i+1)t}}{1-\ell^{-(i+1)}}\right)$$
thus we recover the formula of Theorem~\ref{theorem1}. For $r$ large, the density is close to the lower bound $1-[K_\ell:K]^{-1}$ and indeed a higher rank means that the reductions of many points need to have order prime to $\ell$. As a curiosity, we may rewrite the formula of Theorem~\ref{theorem1} as follows:
$$\D(G) = 1-\frac{1}{[K_\ell:K]} + \frac{[K\cap\mathbb{Q}_{\ell^\infty}:\mathbb{Q}]}{(\ell^{r+1}-1)\cdot [K_\ell\cap\mathbb{Q}_{\ell^\infty}:\mathbb{Q_\ell}]^{r+1}}\,.$$\smallskip

\section{Computations}\label{section-computations}

In this section we compute parameters and densities in a few examples, testing our results with Sage~\cite{sage}.
First note that one can compute parameters as in Lemma~\ref{convenience}: there are algorithms to decide whether some given element is an $\ell^d$--th power for some nonnegative integer $d$ (e.g. implemented in Sage). In particular, one can check the strong $\ell$--indivisibility of a given element and the strong $\ell$--independence of a given set of elements (restrict the possible exponents in the definition of strong $\ell$--independence to $\{0,1,\ldots,\ell-1\}$). So now the proof of Theorem~\ref{thm-def-parameters} gives us an algorithm to compute parameters expressing the $\ell$--divisibility of $G$ in $K$. Indeed, start with a basis $\mathcal{B}$ of $G$ and check whether the associated strongly $\ell$--indivisible elements are strongly $\ell$--independent. If they are, we are done, and if they are not, we can change the basis as prescribed in the proof, increasing $d(\mathcal{B})$. Now start anew with this altered basis. This process has to stop since $d(\mathcal{B})$ is bounded from above.

\begin{exa}\label{exa1218}
Let $\ell = 3$ and $G = \langle 12,18\rangle\subset \mathbb Q^\times$.  Strongly $3$--indivisible elements associated to the given basis are $12$ and $18$, which are not strongly $3$--independent because $12\cdot 18 = 6^3$. This relation yields  $G = \langle 18,6^3\rangle$. Since $18$ and $6$ are strongly $3$--independent, $(0,1,0,0)$ expresses the $3$--divisibility of $G$. Let us now work over $K = \mathbb{Q}(\zeta_9+\zeta_9^{-1})$. The elements $18$ and $6$ remain strongly $3$--independent in $\mathbb Q_9$ hence also in $K$. Thus $G$ has the same parameters over $K$.
\end{exa}

\begin{exa} \label{exastart} Let $K=\mathbb Q(\zeta_8)$, $\ell = 2$ and consider the following subgroups of $K^\times$: $G_1=\langle 12,18\rangle$,   $G_2=\langle 12\zeta_8,18\rangle$ and $G_3=\langle 12,18\zeta_8\rangle$. Note that $2 = a^2$ where $a = \zeta_8^3-\zeta_8$. Since $3\mathcal{O}_K$ is not the square of an ideal (because $3$ is unramified in $K$) and the primes dividing $a\OO_K$ and $3\OO_K$ are distinct, we know that $3$ and $3a$ are strongly $2$--indivisible (they are not even a square times a {unit}). Moreover, note that $a/(1-\zeta_8)^2 = -\zeta_8^3-\zeta_8^2-\zeta_8$ is a fundamental unit, as can be checked with Sage. In particular, $a$ is strongly $2$--indivisible and together with the previous remarks this shows that $3$ and $a$ are strongly $2$--independent in $K$. So $3a^4 = 12$ and $3a$ are strongly $2$--independent and hence $(0,1,0,0)$ are parameters for $G_1 = \langle 3a^4,(3a)^2\rangle$. Similarly, $G_2 = \langle 3a^4\zeta_8,(3a)^2\rangle$ has parameters $(0,1,0,0)$ and $G_3 = \langle 3a^4,\zeta_8(3a)^2\rangle$ has parameters $(0,1,0,3)$.
\end{exa}

To compute the densities, we first compute parameters expressing the $\ell$--divisibility of $G$ in $K$ (as described above) and then use Theorem~\ref{theorem2} and Remark~\ref{tau}. If $\ell = 2$ and $\zeta_4\notin K$, we use  Theorem~\ref{theorem3} instead: we should compute $D_{K_4,2}(G)$ (with the above methods) and $[K_4(\sqrt{G}):K_4]$ (using Theorem~\ref{logdeg}), and we should decide whether $G$ contains minus a square in $K^\times$ or not, which can be done by considering the products $\prod g_i^{\delta_i}$, where $g_1,\ldots,g_r$ are generators of $G$ and $\delta_i\in\{0,1\}$.

\begin{exa}
Let $\ell = 3$ and let $G = \langle 12,18\rangle\subset \mathbb Q^\times$. Since $\zeta_9\not\in \mathbb{Q}_3$ holds, we have $t = 1$. By Example~\ref{exa1218},  
we have parameters $(0,1,0,0)$ hence $\tau_1 = \tau_2 = 1$. We compute $D_{\mathbb{Q},3}(G) = 8/13$. Now work over $K = \mathbb{Q}(\zeta_9+\zeta_9^{-1})$, which has degree $3$ over $\Q$. Since $K_3=\mathbb Q_9$, we have $t=2$. By Example~\ref{exa1218}, we have parameters $(0,1,0,0)$ hence $\tau_1 = \tau_2 = 2$. We compute $D_{K,3}(G) = 20/39$.
We approximated both densities by testing with Sage the primes of norm at most $10^8$ (respectively, $5\cdot 10^5$). We found the values $3545696/5761455$ (respectively,  $21386/41507$) hence an error of less than $0,3\%$.
\end{exa}

\begin{exa} We finally explain the example at the beginning of the introduction. We have $\ell = 2$ and $K=\mathbb{Q}(\zeta_8)$ hence $t = 3$. By Example~\ref{exastart}, the parameters for $G_1$ and $G_2$ are the same, namely $(0,1,0,0)$ hence $\tau_1=\tau_2=3$. On the other hand  the parameters for $G_3$ are $(0,1,0,3)$ hence $\tau_1=3$ and $\tau_2=4$. This allows us to compute $D_{K,2}(G_1)=D_{K,2}(G_2)=\frac{1}{56}$ and $D_{K,2}(G_3)=\frac{1}{448}$. The difference between the densities relies on the fact that $18$ is a square in $K^\times$ while $12$ is not.
We approximated these densities by testing with Sage the primes of norm at most $10^6$ (respectively, $10^6$ and $45\cdot 10^5$).
We found the values $1412/78469$ (resp. $1388/78469$ and $705/316281$) hence an error of less than $0,02\%$.
\end{exa}

\end{document}